\documentclass[11pt]{amsart}
\usepackage{mathptmx}
\usepackage{amsmath}
\usepackage{amscd}
\usepackage{amssymb}
\usepackage{amsthm}
\usepackage{xspace}
\usepackage[all,tips]{xy}
\usepackage[dvips]{graphicx}
\usepackage{verbatim}
\usepackage{syntonly}
\usepackage{hyperref}
\usepackage{amsmath, amsthm, graphics, amssymb,fullpage,color, epsfig,url}
\usepackage{indentfirst}
\usepackage{esint}
\usepackage[normalem]{ulem}

\providecommand{\MR}{\relax\ifhmode\unskip\space\fi MR }

\providecommand{\href}[2]{#2}



\theoremstyle{plain}
\newtheorem{thm}{Theorem}

\newtheorem{lem}[thm]{Lemma}
\newtheorem{prop}[thm]{Proposition}

\theoremstyle{definition}
\newtheorem*{rem}{Remark}

\newcommand{\disp}{\displaystyle}

\DeclareMathOperator{\supp}{supp}


\newcommand{\eps}{\varepsilon}
\newcommand{\vp}{\varphi}


\newcommand{\al}{\alpha}
\newcommand{\be}{\beta}

\newcommand{\de}{\delta}

\newcommand{\te}{\theta}

\newcommand{\om}{\omega}
\newcommand{\Om}{\Omega}
\newcommand{\si}{\sigma}


\newcommand{\ol}{\overline}

\newcommand{\nid}{\noindent}


\newcommand{\iny}{\infty}
\newcommand{\del}{ \partial}
\newcommand{\su}{\subset}
\newcommand{\LP}{\Delta}
\newcommand{\gr}{\nabla}

\newcommand{\Lt}{\mathcal{L}_\tau}


\newcommand{\norm}[1]{\left\| #1\right\|}

\newcommand{\abs}[1]{\left\vert#1\right\vert}
\newcommand{\set}[1]{\left\{#1\right\}}
\newcommand{\brac}[1]{\left[#1\right]}
\newcommand{\pr}[1]{\left( #1 \right) }
\newcommand{\ceil}[1]{\lceil #1 \rceil }
\newcommand{\pb}[1]{\left( #1 \right] }
\newcommand{\bp}[1]{\left[ #1 \right) }
\newcommand{\der}[2]{\frac{\del #1}{\del #2} }



\newcommand{\N}{\ensuremath{\mathbb{N}}}

\newcommand{\R}{\ensuremath{\mathbb{R}}}
\newcommand{\Z}{\ensuremath{\mathbb{Z}}}
\newcommand{\C}{\ensuremath{\mathbb{C}}}

\newcommand{\RD}[1]{\textcolor{red}{#1}}

\title{Improved quantitative unique continuation for \\ complex-valued drift equations in the plane}
\author[Davey, Kenig, Wang]{Blair Davey \and Carlos Kenig \and Jenn-Nan Wang}
\address{Blair Davey, Department of Mathematics, City College of New York CUNY, New York, NY 10031, USA}
\email{bdavey@ccny.cuny.edu}
\thanks{Davey is supported in part by the Simons Foundation Grant number 430198.}
\address{Carlos Kenig, Department of Mathematics, University of Chicago, Chicago, IL 60637, USA}
\email{cek@math.uchicago.edu}
\thanks{Kenig is supported in part by NSF DMS-1800082}
\address{Jenn-Nan Wang, Institute of Applied Mathematical Sciences, NCTS, National Taiwan
University, Taipei 106, Taiwan}
\email{jnwang@math.ntu.edu.tw}
\thanks{Wang is supported in part by MOST 108-2115-M-002-002-MY3}

\subjclass[2010]{35J47, 35J10, 35J05}
\keywords{Carleman estimates, elliptic systems, quantitative unique continuation}
\date{}

\begin{document}

\begin{abstract}
In this article, we investigate the quantitative unique continuation properties of complex-valued solutions to drift equations in the plane.
We consider equations of the form $\LP u + W \cdot \gr u = 0$ in $\R^2$, where $W = W_1 + i W_2$ with each $W_j$ real-valued.
Under the assumptions that $W_j \in L^{q_j}$ for some $q_1 \in \brac{2, \iny}$, $q_2 \in \pb{2, \iny}$, and $W_2$ exhibits rapid decay at infinity, we prove new global unique continuation estimates.
This improvement is accomplished by reducing our equations to vector-valued Beltrami systems.
Our results rely on a novel order of vanishing estimate combined with a finite iteration scheme.
\end{abstract}

\maketitle

\section{Introduction}

The goal of this paper is to show that under suitable hypotheses, we may establish a stronger quantification of the unique continuation properties of complex-valued solutions to drift equations in $\R^2$ of the form
\begin{align}
- \LP u +  W \cdot \gr u = 0.
\label{ePDE}
\end{align}

Before describing our main results, we recall a few fundamental concepts in unique continuation theory.
The partial differential equation (PDE) $Lu = 0$ is said to have the {\em unique continuation property (UCP)} if whenever $u$ is a solution in $\Om$ and $u \equiv 0$ in an open subset of $\Om$, then $u \equiv 0$ in $\Om$.
Going further, the equation $L u = 0$ is said to have the {\em strong unique continuation property (SUCP)} if whenever $u$ is a solution in $\Om$ and $u$ vanishes to infinite order at some point $x_0 \in \Om$ (in an appropriate sense), then $u \equiv 0$ in $\Om$.
Therefore, whenever we are in a setting where the SUCP holds, it makes sense to ask the following question:
$$\textrm{What is the fastest rate of decay that a non-trivial solution can have?}$$
This local quantity is referred to as the {\em order of vanishing} and can be interpreted as a quantification of the SUCP.
A related global object is the {\em rate of decay at infinity}, a quantity that distinguishes between trivial and non-trivial entire solutions based on their asymptotic behavior.
Other topics of study in unique continuation theory include doubling indices and nodal (zero) sets of solutions.
We refer the reader to \cite{LM16, Log18a, Log18b} for recent progress in these related directions.
Our current work is related to Landis' conjecture, which seeks to determine the optimal rate of decay at infinity for solutions to Schr\"odinger equations.
As briefly described above, order of vanishing estimates are interesting on their own, but these quantities also serve as an important tool in our study of quantitative unique continuation at infinity properties.

In the late 1960s, E.~M.~Landis \cite{KL88} conjectured that if $u$ is a bounded solution to 
\begin{equation}
\label{ePDE2}
\LP u - V u = 0
\end{equation}
in $\R^n$, where $V$ is a bounded function and $\abs{u(x)} \lesssim \exp\pr{- c \abs{x}^{1+}}$, then $u \equiv 0$.
This conjecture was later disproved by Meshkov \cite{M92} who constructed non-trivial functions $u$ and $V$ that solve $\LP u - V u = 0$ in $\R^2$, where $V$ is bounded and $\abs{u(x)} \lesssim \exp\pr{- c \abs{x}^{4/3}}$. 
Meshkov also proved the following {\em qualitative unique continuation} result: 
If $\LP u - V u = 0$ in $\R^n$, where $V$ is bounded and $\abs{u\pr{x}} \lesssim \exp\pr{- c \abs{x}^{4/3+}}$, then necessarily $u \equiv 0$.

In their work on Anderson localization \cite{BK05}, Bourgain and Kenig established a quantitative version of Meshkov's result. 
As a first step in their proof, they used three-ball inequalities derived from a Carleman estimates to establish order of vanishing estimates for local solutions to Schr\"odinger equations.
Then, through a scaling argument, they showed that if $u$ and $V$ are bounded, and $u$ is normalized so that $\abs{u(0)} \ge 1$, then for sufficiently large values of $R$,
\begin{equation*}
 \inf_{|x_0| = R}\norm{u}_{L^\iny\pr{B_{1}(x_0)}} \ge \exp{(-CR^{4/3}\log R)}.
\end{equation*} 
Since $ \frac 4 3 > 1$, the constructions of Meshkov, in combination with the qualitative and quantitative unique continuation theorems just described, indicate that Landis' conjecture cannot be true for complex-valued solutions at least in $\R^2$.
However, Landis' conjecture still remains open in the general real-valued case.

In recent years, there has been a surge of activity surrounding Landis' conjecture in the real-valued planar setting.
The breakthrough article \cite{KSW15} proved a quantitative form of Landis' conjecture under the assumption that the zeroth-order term satisfies $V \ge 0$ a.e.
Subsequent papers established analogous results in the settings with variable coefficients \cite{DKW17} and singular lower order terms \cite{KW15, DW20}.
More recently, it has been shown that this theorem still holds when $V_-$ exhibits rapid decay at infinity \cite{DKW19}, and when $V_-$ exhibits slow decay at infinity \cite{Dav19a}.

The work in \cite{KW15} focuses on quantitative Landis-type theorems for {\bf real-valued} solutions to drift equations in the plane of the form \eqref{ePDE}.
One of the main theorems in \cite{KW15} shows that if $W \in L^q$ for some $q \in \brac{2, \iny}$ and $u$ is a real-valued, bounded, normalized solution to \eqref{ePDE}, then whenever $R$ is sufficiently large, it holds that 
\begin{equation}
\inf_{\abs{z_0} = R} \norm{u}_{L^\iny\pr{B_1\pr{z_0}}} \ge \left\{\begin{array}{ll} \exp\pr{- C R^{1 - \frac 2 q} \log R} & \text{ if } q > 2 \\ R^{-C} & \text{ if } q =2 \end{array} \right..
\label{realResult}
\end{equation}
In contrast, the article \cite{DZ18} contains quantitative Landis-type theorems for {\bf complex-valued} solutions to elliptic equations in the plane.
The related theorem in \cite{DZ18} for drift equations shows that if $W \in L^q$ for some $q \in \pb{2, \iny}$ and $u$ is a complex-valued, bounded, normalized solution to \eqref{ePDE}, then whenever $R$ is sufficiently large, it holds that 
\begin{equation}
\inf_{\abs{z_0} = R} \norm{u}_{L^\iny\pr{B_1\pr{z_0}}} \ge \exp\pr{- C R^{2} \log R}.
\label{complexResult}
\end{equation}
By comparing the results of \eqref{realResult} and \eqref{complexResult}, we see that the rate of decay significantly improves when we restrict to the real-valued setting.
In particular, the presence of an imaginary part of $W$ drastically affects the rate of decay of solutions.
This current paper is motivated by our desire to understand and quantify the effect that the complex part of $W$ has on the rate of decay at infinity.

In \cite{Dav14} and \cite{LW14}, the authors investigated the quantitative unique continuation properties of solutions to elliptic equations with lower order terms that exhibit pointwise decay at infinity.
The results in \cite{Dav14} and \cite{LW14} imply that if $W \in L^\iny$ exhibits rapid enough polynomial decay at infinity and $u$ is a complex-valued, bounded, normalized solution to \eqref{ePDE}, then whenever $\eps > 0$ and $R$ is sufficiently large, it holds that 
\begin{equation}
\inf_{\abs{{z_0}} = R} \norm{u}_{L^\iny\pr{B_1\pr{{z_0}}}} \ge \exp\pr{- R^{1+\eps}}.
\label{decayResult}
\end{equation}
We initiated this project with the belief that we could somehow combine the results described by \eqref{realResult}, \eqref{complexResult}, and \eqref{decayResult}.
As described in Theorem \ref{LandisThm} below, this is in fact true is we assume that the complex part of $W$ exhibits significant exponential decay at infinity in an appropriate sense that we will quantify.

In order to further understand the motivation for the current setting, we will describe the techniques that led to the estimates in \eqref{realResult}, \eqref{complexResult}, and \eqref{decayResult}.
Carleman estimate techniques were used in \cite{DZ18}, while Carleman estimates were combined with iterative arguments in \cite{Dav14, LW14} to prove \eqref{complexResult} and \eqref{decayResult}, respectively.
Such techniques have been used to prove many other results related to Landis' conjecture, see for example \cite{BK05, DZ19, Dav19b}.  
The Carleman method is applicable in any dimension and, in some cases, it gives rise to optimal bounds in the complex-valued setting.
Since Carleman estimates do not distinguish between real and complex values,  a different approach was used in \cite{KW15} to prove \eqref{realResult}, where the focus was on real-valued solutions and equations in the plane.
The proofs in \cite{KSW15, KW15, DKW17, DW20, DKW19, Dav19a} center around the relationship between second-order elliptic equations in the plane and Beltrami equations.
In suitable settings, one can use a second-order PDE to generate a Beltrami equation, a first-order elliptic equation in the complex plane.
The similarity principle for solutions to the Beltrami equation, along with Hadamard's three-circle theorem, leads to a three-ball inequality similar to the one derived in \cite{BK05}.
However, these new three-ball inequalities gives the precise exponents that could not be achieved with a direct Carleman approach.

In this article, by viewing complex-valued drift equations as systems of real-valued drift equations, we have found a way to combine many of the ideas mentioned above.
First we show that \eqref{ePDE} can be realized as a system of real-valued drift equations.
Then we show that such real-valued systems can be reduced to vector-valued Betrami equations.
Instead of invoking a similarity principle for these systems (as we did in \cite{DKW19}), we rely on $L^p - L^2$ Carleman estimates for the operator $\bar \del$ (similar to those that were previously developed in \cite{DLW19}) to give rise to our three-ball inequalities.
The three-ball inequality is then used to establish the order of vanishing result.
If the complex part of the potential function decays sufficiently quickly, then a scaling argument combined with repeated applications of the order of vanishing estimate gives rise to our quantitative unique continuation at infinity estimates.

Before stating the main result of this article, we describe the kinds of potential functions that we will work with.
Assume that there exist $q_1 \in \brac{2, \iny}$, $q_2 \in \pb{2, \iny}$, $c_0, \de_0 > 0$ so that $W = W_1 + i W_2$, where $W_i: \R^2 \to \R^2$ for $i = 1, 2$, and 
\begin{align}
& \norm{W_{1}}_{L^{q_1}\pr{\R^2} } \le 1
\label{W1Cond} \\
& \norm{W_2}_{L^{q_2}\pr{B_1\pr{z_0}}}
\le \exp\pr{- c_0 \abs{z_0}^{1 - \frac 2 {q_1} + \de_0}} \quad \forall z_0 \in \R^2.
\label{W2Cond}
\end{align}
In particular, the real part of $W$ satisfies the same hypotheses as it did in \cite{KW15}, while the complex part of $W$ must decay exponentially at a rate that depends on the properties of the real part of $W$.

Now we may state the main result of this article.
The following theorem is quantitative unique continuation at infinity estimate for solutions to \eqref{ePDE}, or a Landis-type theorem for complex-valued drift equations.

\begin{thm}
\label{LandisThm}
Assume that for some $q_1 \in \brac{2, \iny}$, $q_2 \in \pb{2, \iny}$, $c_0, \de_0 > 0$, $W = W_1 + i W_2 : \R^2 \to \C^2$ satisfies \eqref{W1Cond} and \eqref{W2Cond}.
Let $u: \R^2 \to \C$ be a solution to \eqref{ePDE} that is bounded and normalized in the sense that for some $t_0 \in \brac{1, 2}$,
\begin{align}
& \abs{u\pr{z}} \le \exp\pr{C_0 \abs{z}^{1 - \frac 2 {q_1}}}
\label{uBd} \\
& \norm{\gr u}_{L^{t_0}\pr{B_1\pr{0}}} \ge 1,
\label{normed}
\end{align}
where $t_0 < 2$ when $q_1 = 2$. 
Then for any $\eps > 0$ and any $R \ge \tilde R\pr{R_0, C_0, q_1, q_2, c_0, \de_0, t_0, \eps}$, it holds that
\begin{equation}
\inf_{\abs{z_0} = R} \norm{u}_{L^\iny\pr{B_1\pr{z_0}}} \ge \exp\pr{- R^{1+\eps}}.
\label{globalEst}
\end{equation}
\end{thm}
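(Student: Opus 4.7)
The plan is to follow the strategy outlined in the introduction: convert \eqref{ePDE} to a real vector-valued elliptic system, reduce this system to a vector-valued Beltrami-type inequality, use an $L^p$--$L^2$ Carleman estimate for $\bar\partial$ to derive a three-ball inequality, extract an order of vanishing estimate, and finally iterate via scaling. To begin, I write $u = u_1 + i u_2$ so that \eqref{ePDE} becomes a $2 \times 2$ real system $\LP U = \B{A}_1 \gr U + \B{A}_2 \gr U$, where $U = (u_1,u_2)^T$, the matrix $\B{A}_1$ depends only on $W_1$, and $\B{A}_2$ depends only on $W_2$. Setting $F := 2 \del U$ (so that $\bar\del F = \tfrac{1}{2}\LP U$), the system becomes a first-order Beltrami-type inequality
\begin{equation*}
\abs{\bar\del F\pr{z}} \le \abs{W_1\pr{z}}\,\abs{F\pr{z}} + \abs{W_2\pr{z}}\,\abs{F\pr{z}},
\end{equation*}
in which the $W_1$ term carries the generic lower-order contribution while the $W_2$ term is treated as a perturbative error whose smallness at infinity is quantified by \eqref{W2Cond}.

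The next step is to establish a three-ball inequality at scale $R$ centered at a point $z_0$ with $\abs{z_0} \simeq R$. After rescaling $\tilde F\pr{\tilde z} = F\pr{z_0 + R \tilde z}$, the rescaled coefficients satisfy $\|\tilde W_1\|_{L^{q_1}} \lesssim R^{1 - 2/q_1}$ and $\|\tilde W_2\|_{L^{q_2}} \lesssim \exp\pr{- c_0' R^{1 - 2/q_1 + \de_0}}$. Applying an $L^{p}$--$L^2$ Carleman estimate for $\bar\del$ of the form used in \cite{DLW19}, with a radial weight adapted to the rescaled ball, and absorbing the $\tilde W_1$ contribution by H\"older's inequality (this is where the endpoint $q_1 = 2$ forces the auxiliary $L^{t_0}$ norm with $t_0 < 2$ in \eqref{normed}), I obtain an interpolation inequality of the shape
\begin{equation*}
\norm{F}_{L^2\pr{B_{r_2}\pr{z_0}}}
\le C \exp\pr{C R^{1 - \frac{2}{q_1}}} \norm{F}_{L^2\pr{B_{r_1}\pr{z_0}}}^{\te} \norm{F}_{L^2\pr{B_{r_3}\pr{z_0}}}^{1 - \te}
+ \exp\pr{- c_0'' R^{1 - \frac{2}{q_1} + \de_0}} \norm{F}_{L^2\pr{B_{r_3}\pr{z_0}}},
\end{equation*}
valid for $0 < r_1 < r_2 < r_3$. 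Since $\de_0 > 0$, the $W_2$-error term is negligible compared to the main factor $\exp\pr{C R^{1-2/q_1}}$, and a standard argument converts this into a local order of vanishing estimate: if $\|F\|_{L^2\pr{B_{r}\pr{z_0}}}$ is too small relative to its values on a fixed reference ball, a contradiction arises; quantitatively, one recovers a lower bound of the form $\|F\|_{L^2\pr{B_1\pr{z_0}}} \ge \exp\pr{- C R^{1 - 2/q_1} \log R}$, mirroring the real-valued estimate \eqref{realResult}.

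Finally, to upgrade this local statement into the global bound \eqref{globalEst}, I run a finite iteration scheme: starting from the normalization \eqref{normed} at the origin, I propagate a lower bound outward along a chain of overlapping balls of slowly increasing radii, at each step paying the order of vanishing cost. The bound \eqref{uBd} on $u$ controls the upper end of the three-ball inequality at every step, and the number of iterations needed to reach scale $R$ is chosen so that the total accumulated loss is at most $\exp\pr{-R^{1+\eps}}$; since $1 - 2/q_1 < 1$, any prescribed $\eps > 0$ can be achieved by taking $R$ large. The main obstacle, I expect, will be the bookkeeping in the Carleman step: one must track the constants so that the exponential decay rate $1 - 2/q_1 + \de_0$ imposed on $W_2$ in \eqref{W2Cond} is exactly sufficient to dominate the scale-dependent prefactor $\exp\pr{C R^{1 - 2/q_1}}$ arising from $W_1$, and so that the endpoint case $q_1 = 2$ (together with the restriction $t_0 < 2$) can be handled uniformly with the same weight.
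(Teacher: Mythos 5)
Your overall architecture (complexify to a real system, pass to a first-order $\bar\del$ system, Carleman three-ball inequality, order of vanishing, iteration) matches the paper, but there is a genuine gap at the central step: how $W_1$ is handled in the Beltrami reduction. You keep $W_1$ inside the first-order inequality $\abs{\bar\del F} \le \pr{\abs{W_1}+\abs{W_2}}\abs{F}$ and propose to absorb its contribution into the Carleman estimate by H\"older. For the operator $\bar\del$, the $L^p$--$L^2$ Carleman estimate (Theorem \ref{Carlp2}) gains only $\tau^{\be}$ with $\be = 1 - \tfrac1p = 1 - \tfrac1t - \tfrac1{q_1}$, so absorbing a coefficient of size $M_1 \sim R^{1-2/q_1}$ in $L^{q_1}$ forces $\tau \gtrsim M_1^{1/\be} = M_1^{2q_1/(q_1-2)} \sim R^{2}$ (taking $t=2$), and the resulting three-ball prefactor is $\exp\pr{CR^{2}}$, not the $\exp\pr{CR^{1-2/q_1}}$ you assert; this route only reproduces the bound \eqref{complexResult} of \cite{DZ18} and cannot yield \eqref{globalEst}. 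Worse, when $q_1=2$ one has $\be = \tfrac12 - \tfrac1t \le 0$, so no absorption is possible at all. The device your proposal omits is to remove $W_1$ from the system \emph{before} the Carleman step: setting $v_j = \del u_j\, e^{-T\pr{W_1\pr{u_j}}}$ with $T$ the Cauchy--Pompeiu transform yields $\bar\del \vec v = G\vec v$ with an off-diagonal matrix $G$ containing only $W_2$ (times factors $e^{\pm T\brac{\cdots}}$). The cost of $W_1$ then enters only through the size of the multiplier, namely $\exp\pr{CM_1}$ for $q_1>2$ or $\exp\pr{CM_1^2}$ for $q_1=2$ (via Lemma \ref{lemma0701}), and the exponential smallness \eqref{W2Cond} of $W_2$ is exactly what keeps $M_2^{\mu}\exp\pr{CM_1^{\al}}$ bounded in the vanishing order of Theorem \ref{OofV1}.

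A second, related problem is the final step. Your claimed local lower bound $\exp\pr{-CR^{1-2/q_1}\log R}$ on $B_1\pr{z_0}$ presupposes reference data at unit scale near $z_0$, but the only normalization available is \eqref{normed} at the origin; transporting it outward is precisely the content of the iteration, and it is lossy. At each step the inherited lower bound $\hat c$ is exponentially small and re-enters the vanishing order through the term $\log\pr{\hat C/\hat c}$, which is why the exponent only decreases along the recursion $\al_{k+1} = \al_k - \tfrac{\al_k-1}{2}\eps_1$ toward $1$ (over finitely many steps, with radii growing like $S \mapsto S^{1/(1-\eps_1)}$), and the theorem can only claim $1+\eps$ rather than $1-\tfrac{2}{q_1}$. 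If the local estimate in the form you state were actually available at every $z_0$ with $\abs{z_0}=R$, the conclusion would be far stronger than \eqref{globalEst}; as written, the proposal neither obtains that estimate nor supplies the quantitative bookkeeping of the iteration that produces the stated exponent.
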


\begin{rem}
The value $R_0$ that appears in this theorem belongs to $\pr{0,1/e}$ and is a byproduct of the Carleman estimate that is used in our proofs.
\end{rem}

Compared to the results of \cite{KW15}, this rate of decay estimate is more rapid.
That is, when we allow for a non-trivial complex part of the potential, even a rapidly decaying part, the order of vanishing jumps from $1 - \frac 2 {q_1}$ to any value greater than $1$.
On the flipside, this rate of decay is a great improvement over the results of \cite{DZ18} since the power is far below $2$.
In summary, when we consider equations with a rapidly-decaying complex part of the potential, the resulting rate of decay for solutions falls in between the rates for equations with a purely real potential and equations with a singular complex potential.

This theorem and the Landis-type results in \cite{Dav19a} and \cite{DKW19} all give the same bound for the rate of decay at infinity.
In both \cite{Dav19a} and \cite{DKW19}, the setting is real-valued and the zeroth-order potential, $V$, has a negative part that decays at infinity.
In \cite{DKW19}, we assume that $V_- = \max\set{-V, 0}$ exhibits (rapid) exponential decay at infinity, quantitatively similar to the assumption that has been placed on $W_2$ in the current article.
In both the current article and \cite{DKW19}, we reduce our PDE to a Beltrami system of equations in which the multiplying factor is a $2 \times 2$ off-diagonal matrix.
To ensure that the non-trivial entries of the matrix are small enough for our techniques to work, we assume that some part of the potential ($V_-$ in \cite{DKW19}, $W_2$ here) is exponentially small. 
The same unique continuation estimate was shown to hold in \cite{Dav19a} when $V_-$ exhibits (slow) polynomial decay at infinity.
There, it is observed that if $V_-$ decays polynomially at infinity, then a positive multiplier exists and can be used to transform the PDE into a scalar-valued Beltrami equation.
By avoiding the vector-valued setting, we don't need to impose any further decay conditions on the potential functions.
In the current setting, we don't see how to avoid the vector-valued setting, either with the introduction of a positive multiplier or through some other technique.
As such, we impose the condition that $W_2$ exhibits rapid decay at infinity.

To prove our global theorem, we rely on the following order of vanishing estimate.
Although this theorem serves as an important tool in the proof of our first result, it also provides a quantification of the strong unique continuation property for local solutions to \eqref{ePDE}.
Furthermore, since this theorem allows the real part of $W$ to belong to $L^2$ instead of $L^{2+}$, then this result serves as an improvement over other known results in this direction, see for example \cite[Corollary 1]{DZ18}.
An alternative order of vanishing theorem appears below within Section \ref{OofVS}.

\begin{thm}
\label{OofV1}
Let $d \in \pb{1, 2}$.
Assume that for some $q_1 \in \brac{2, \iny}$ and $q_2 \in \pb{2, \iny}$, $\norm{W_j}_{L^{q_j}\pr{B_d}} \le M_j$ for $j = 1,2$.
Let $u$ be a solution to \eqref{ePDE} in $B_d$ that satisfies
\begin{align}
& \norm{u}_{L^\iny\pr{B_d}} \le \hat C. 
\label{localBd} 
\end{align}
If $q_1 > 2$ and we assume that
\begin{align}
& \norm{\gr u}_{L^2\pr{B_1}} \ge \hat c,
\label{localNorm}
\end{align}
 then for any $z_0 \in B_1$ and any $r$ sufficiently small, 
\begin{equation}
\norm{\gr u}_{L^2\pr{B_r\pr{z_0}}} \ge 
r^{C_{2} \brac{1+ M_2^{\mu_2} \exp\pr{C_3 M_1}} + \frac{c}{\log d} \set{C_{1} M_1 + \log\brac{\frac{C_2 \hat C \pr{1 + M_2} }{\hat c \sqrt{d-1}}}}},
\label{localEst}
\end{equation}
where $\mu_2 = \frac{2q_2}{q_2-2}$, $C_1 = C_1\pr{R_0, q_1}$, $C_2 = C_2\pr{R_0, q_2}$, $C_3 = C_3\pr{R_0, q_1, q_2}$, and $c$ is universal.
\\
If $q_1 = 2$ and we assume that for some $t_0 \in \bp{1, 2}$, 
\begin{align}
& \norm{\gr u}_{L^{t_0}\pr{B_1}} \ge \hat c,
\label{localNorm2}
\end{align} 
then for any $z_0 \in B_1$, any $r$ sufficiently small, any $q \in \pr{2, q_2}$, any $t \in \pr{\max\set{\frac{q}{q-1}, t_0}, 2}$, and any $t_1 \in \pb{t, 2}$,
\begin{equation}
\norm{\gr u}_{L^{t_1}\pr{B_r\pr{z_0}}} \ge r^{C_2 \brac{1+ M_2^{\mu} \exp\pr{C_3 M_1^2}} + \frac{c}{\log d} \set{C_1 M_1^2 + \log \brac{\frac{C_2 \hat C\pr{1 + M_2}}{ \hat c \sqrt{d-1}}}}},
\label{localEst2}
\end{equation}
where $\mu = \frac{t q }{t q- q - t}$, $C_1 = C_1\pr{R_0, q, t_0, t, t_1}$, $C_2 = C_2\pr{R_0, q_2, q, t}$, $C_3 = C_3\pr{R_0, q_2, q}$, and $c$ is universal.
\end{thm}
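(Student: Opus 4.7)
\smallskip

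\noindent\textbf{Proof plan for Theorem \ref{OofV1}.} The plan is to convert the scalar complex-valued equation \eqref{ePDE} into a vector-valued Beltrami system for the gradients and then feed this system into an $L^p$--$L^2$ Carleman estimate for $\bar\del$ to produce a three-ball inequality. First, I would write $u = u^{(1)} + i u^{(2)}$ and split \eqref{ePDE} into the real-valued system
\begin{align*}
-\LP u^{(1)} + W_1 \cdot \gr u^{(1)} - W_2 \cdot \gr u^{(2)} &= 0, \\
-\LP u^{(2)} + W_2 \cdot \gr u^{(1)} + W_1 \cdot \gr u^{(2)} &= 0.
\end{align*}
Setting $v_j = \del_1 u^{(j)} - i \del_2 u^{(j)}$ and using $\LP = 4\bar\del \del$, I would identify each real vector field $W_j = (W_j^{(1)}, W_j^{(2)})$ with the complex function $\Cal{W}_j = W_j^{(1)} + i W_j^{(2)}$ and rewrite each transport term as $W_j \cdot \gr u^{(k)} = \tfrac12(\Cal{W}_j v_k + \overline{\Cal{W}_j v_k})$. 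This converts the system above into a first-order Beltrami system of the form
\begin{equation*}
\bar\del \bv = A\, \bv + B\, \bar{\bv}, \qquad \bv = (v_1, v_2)^T,
\end{equation*}
where $A,B$ are $2\times 2$ matrices whose diagonal entries involve $\Cal{W}_1$ and whose off-diagonal entries involve $\Cal{W}_2$. In particular, the two components decouple in the limit $W_2 \to 0$, which is what will allow the $W_2$-terms to be treated perturbatively.

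Next, I would apply an $L^p$--$L^2$ Carleman estimate for the operator $\bar\del$ (of the kind developed in \cite{DLW19}) with a radial logarithmic weight $\vp_\tau$ supported in an annulus inside $B_d$. Schematically, for $q \in (2,q_2)$ and appropriate conjugate parameters, it should read
\begin{equation*}
\|e^{\tau \vp}\bv\|_{L^{p}} \le C\,\|e^{\tau \vp} \bar\del \bv\|_{L^{p'}}
\end{equation*}
with $p = 2q/(q-2)$ (and, in the $q_1=2$ case, with $L^2$ on the right replaced by $L^{t'}$ for $t \in (t_0,2)$). Substituting the Beltrami system and applying Hölder splits the right-hand side into two pieces: one involving $\|A\|_{L^{q_1}}$ with $M_1$, and one involving $\|B\|_{L^{q_2}}$ with $M_2$. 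The $A$-piece can be absorbed into the left by taking $\tau \ge C_1 M_1$ (or $C_1 M_1^2$ in the borderline case $q_1=2$, which is what forces the $M_1^2$ in \eqref{localEst2}). The $B$-piece has a mismatched exponent that cannot be absorbed, but by Hölder on the annular support of $\vp_\tau$ the $W_2$-factor will come with a power $M_2^{\mu_2}$ (respectively $M_2^\mu$) together with an additional polynomial factor in $\tau$ absorbable by enlarging $\tau$ by $C_3 M_1$ (respectively $C_3 M_1^2$) and paying the multiplicative constant $\exp(C_3 M_1)$ (respectively $\exp(C_3 M_1^2)$) in the final lower bound. This is the source of the first bracketed term in the exponents of \eqref{localEst} and \eqref{localEst2}.

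From this Carleman inequality, inserted with cutoffs on three concentric annuli centered at an arbitrary $z_0 \in B_1$ and optimized in $\tau$, I would derive in the standard way a three-ball inequality of Hadamard type for $\|\bv\|$ on balls of radii $r_1 < r_2 < r_3 \le d-1$. The hypothesis $d\in(1,2]$ enters here: the outer radius is $\lesssim (d-1)$, which explains the $\sqrt{d-1}$ in the denominator after one uses the local bound \eqref{localBd} to control $\|\bv\|$ on $B_d$ by $\hat C/\sqrt{d-1}$ (from Caccioppoli), and the quantitative normalization \eqref{localNorm} (or \eqref{localNorm2}) to bound the intermediate radius from below by $\hat c$. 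Iterating the three-ball inequality along a chain of balls from $B_1$ down to $B_r(z_0)$ yields a lower bound of the form $r^{\Theta}$, where $\Theta$ absorbs both the $\tau$-dependent Carleman cost and the logarithm of the ratio $\hat C(1+M_2)/(\hat c \sqrt{d-1})$ coming from initiating the iteration; collecting these ingredients gives \eqref{localEst}.

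The main obstacle will be the case $q_1 = 2$, because then the Carleman absorption of the $W_1$-term is critical: the exponent pairing becomes saturated, and one must replace the $L^2$ gradient normalization \eqref{localNorm} by the subcritical \eqref{localNorm2}, use a second Hölder interpolation between $L^{t_1}$ and $L^t$, and track an extra loss of powers of $M_1$ (giving $M_1^2$ in place of $M_1$ in the exponent). A secondary technical point is ensuring that the perturbative $B$-term in the Beltrami system can be controlled uniformly across the annuli used in the iteration; this is precisely why the hypothesis $q_2 > 2$ is assumed strictly, so that the Hölder exponent $\mu = tq/(tq - q - t)$ (respectively $\mu_2 = 2q_2/(q_2-2)$) is finite. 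Once these two points are handled, the iteration produces the claimed estimates \eqref{localEst} and \eqref{localEst2}.
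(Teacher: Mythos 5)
Your overall architecture (split $u$ into real and imaginary parts, reduce to a $2\times 2$ Beltrami system for the gradients, run an $L^p$--$L^2$ Carleman estimate for $\bar\del$ to get a three-ball inequality, and convert the normalization \eqref{localNorm}/\eqref{localNorm2} and the bound \eqref{localBd} into an order of vanishing) matches the paper. The genuine gap is your treatment of the diagonal $W_1$-terms. You propose to keep them in the system and absorb them by taking the Carleman parameter $\tau \ge C_1 M_1$. But absorption against a Carleman gain of $\tau^{\be}$ with $\be = 1 - 1/p$, where H\"older forces $\frac 1p = \frac 1{q_1} + \frac 1t$, requires $\tau \gtrsim M_1^{1/\be}$, and $1/\be = \mu$ is exactly the exponent $\frac{2q_1}{q_1-2}$ appearing in Theorem \ref{OofV0}; so even for $q_1 > 2$ your route produces $M_1^{2q_1/(q_1-2)}$ in the vanishing order, not the claimed structure $C_1 M_1 + M_2^{\mu_2}\exp(C_3 M_1)$ of \eqref{localEst} (whose whole point, per the remark following Theorem \ref{OofV0}, is to be a perturbation of the real-valued result when $M_2 \ll M_1$). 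Worse, when $q_1 = 2$ the pairing degenerates to $p = 1$ and $\be = 0$: there is no $\tau$-gain at all, and an $L^2$ potential cannot be absorbed no matter how you adjust $t_0$, $t$, or $t_1$ --- your proposed fix for the critical case does not touch this obstruction.

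The missing idea is a gauge transformation via the Cauchy--Pompeiu transform: the paper sets $v_j = \del u_j\, e^{-T(W_1(u_j))}$, which removes the diagonal $W_1$-entries from the system entirely and leaves a purely off-diagonal matrix $G$ with entries of the form $W_2(u_{\hat j})\, e^{\pm T[W_1(u_2) - W_1(u_1)]}$. Only $G$ --- hence only $M_2$, up to exponential factors in $M_1$ --- enters the Carleman absorption, which is the source of the $M_2^{\mu_2}\exp(C_3 M_1)$ (resp.\ $M_2^{\mu}\exp(C_3 M_1^2)$) term; the additive $C_1 M_1$ (resp.\ $C_1 M_1^2$) comes from the factors $e^{\pm T(W_1(u_j))}$ incurred when converting between $\norm{v}$ and $\norm{\gr u}$ in the lower and upper normalizations, which feed into the $\log(\hat C/\hat c)$ part of the exponent. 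For $q_1 > 2$ one uses $\norm{TW_1}_{L^\iny} \le C M_1$; for $q_1 = 2$ one instead uses the exponential integrability estimate $\fint_{B_r} e^{s\abs{TW_1}} \le C r^{-sCM_1} e^{sCM_1 + s^2 C M_1^2}$ from Lemma 3.3 of \cite{KW15}, which is precisely where the $M_1^2$ in \eqref{localEst2} originates. Without this gauge step your plan cannot reach the $q_1 = 2$ case or the stated $M_1$-dependence in either case.
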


\begin{rem} 
If $W_2 \equiv 0$, then $M_2 = 0$ and we recover results on the order of vanishing estimates and the decay rates at infinity (a real version of Theorem~\ref{LandisThm}) from \cite{KW15}.
As such, this theorem may be interpreted as a complex perturbation of the real-valued result.
\end{rem}

The article is organized as follows.
In the next section, Section \ref{BelSysS}, three-ball inequalities for general vector-valued Beltrami systems are used to prove order-of-vanishing estimates for solutions to such equations.
Section \ref{OofVS} shows how the drift equation \eqref{ePDE} may be reduced to a vector-valued Beltrami equation.
Using these new presentations, we prove the order of vanishing results given by Theorems \ref{OofV1} and \ref{OofV0}.
Section \ref{UCEstS} shows how Theorem \ref{LandisThm} follows from Theorem \ref{OofV1} through rescaling combined with iteration.
When $q_1 > 2$, we must use the alternative order of vanishing estimate described by Theorem \ref{OofV0} to initiate the iterative process.
As such, this section has been divided into two parts, corresponding to the proof for $q_1 > 2$ and the proof for $q_1 = 2$.
The Carleman estimates that are crucial to the proof in Section \ref{BelSysS} are presented in Section \ref{CarEstS}. \\

\nid {\bf Acknowledgement.}
Part of this research was carried out while the first author was visiting the National Center for Theoretical Sciences (NCTS) at National Taiwan University.
The first author wishes to the thank the NCTS for their financial support and their kind hospitality during her visit to Taiwan.

\section{Estimates for general Beltrami systems}
\label{BelSysS}

Here we use three-ball inequalities derived from Carleman estimates to prove order of vanishing estimates for solutions to $2$-vector equations of the form 
\begin{equation}
\label{vecEqn}
\bar \del \vec{v} = G \vec{v},
\end{equation}
where $\vec v=(v_1,v_2)$ is some $2$-vector and $G$ is a $2 \times 2$ matrix function.
This is the major tool in proving our order of vanishing estimates for drift equations. The following Carleman estimate for first order operators is crucial to the arguments.
For a very similar estimate, we refer the reader to \cite[Theorem 3.1]{DLW19}.

\begin{thm}
\label{Carlp2}
Let $p \in \pb{1, 2}$.
There exists $R_0 \in \pr{0, 1/e}$ so that for any $\tau$ sufficiently large and any $u \in C^\iny_c\pr{B_{R_0} \setminus \set{0}}$, it holds that
\begin{equation}
\label{3.1}
\tau^\be \norm{\pr{r \log r}^{-1} e^{-\tau \phi(r)} u }_{L^2\pr{B_{R_0}}}
\le C \norm{r^{1 - 2/p} \pr{\log r} e^{-\tau \phi\pr{r}} \bar \del u}_{L^p\pr{B_{R_0}}},
\end{equation}
where $\phi\pr{r} = \log r + \tfrac 1 2 \log \pr{\log r}^2$, $\be = 1 - \frac 1 p$, and $C = C\pr{p,R_0}$.
\end{thm}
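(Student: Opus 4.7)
The plan is to follow the standard polar-coordinates-plus-Fourier-decomposition strategy for Carleman estimates for the Cauchy-Riemann operator $\bar\del$, supplemented by a Sobolev-style upgrade to reach the $L^p\to L^2$ bound. The starting point is to set $v = e^{-\tau\phi(r)} u$ and use $\bar\del = \tfrac 12 e^{i\theta}\bigl(\partial_r + i r^{-1}\partial_\theta\bigr)$ to rewrite
\[
e^{-\tau\phi(r)}\bar\del u = \tfrac 12 e^{i\theta}\bigl(\partial_r + i r^{-1}\partial_\theta + \tau \phi'(r)\bigr) v.
\]
Expanding $v(r,\theta) = \sum_{k\in\Z} v_k(r) e^{ik\theta}$ decouples \eqref{3.1} into a countable family of one-dimensional weighted inequalities for the radial operators $L_{k,\tau} v_k := v_k'(r) - \tfrac{k}{r} v_k(r) + \tau \phi'(r) v_k(r)$, which must then be proved uniformly in $k$ and summed via Parseval.

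Next I would prove the $L^2\to L^2$ version of \eqref{3.1} and then upgrade to general $p \in (1,2]$. After the change of variable $t = -\log r$, which maps $B_{R_0}$ to a half-line, $L_{k,\tau}$ becomes a first-order operator in $t$ whose leading coefficient is $\tau - k$, plus a perturbation of size $O(t^{-1})$ arising from the $\tfrac 12 \log(\log r)^2$ term in $\phi$. Choosing $R_0 \in (0, 1/e)$ so that $|\log r| \ge 1$ on $B_{R_0}$ and taking $\tau$ large and non-integer, so that $|\tau - k| \ge c > 0$ uniformly in $k \in \Z$, a weighted integration by parts yields the $p=2$ case of \eqref{3.1} (with $\be = 1/2$); the weight $(r\log r)^{-1}$ on the left is produced by the identity $\phi'(r) = r^{-1}\bigl(1 + (\log r)^{-1}\bigr)$. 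To pass to general $p$, I would decompose $B_{R_0}$ into dyadic annuli $A_j = \{2^{-j-1} \le r \le 2^{-j}\}$, on each of which the weights $r^{1-2/p}$, $(r\log r)^{-1}$, and $e^{-\tau\phi(r)}$ are essentially constant. On $A_j$ rescaled to unit size the Cauchy transform satisfies $\|\bar\del^{-1} g\|_{L^2(A)} \lesssim \|g\|_{L^p(A)}$ by the Hardy-Littlewood-Sobolev inequality; combining this with the $L^2 \to L^2$ Carleman estimate and interpolating the two $\tau$-gains yields \eqref{3.1} with the correct power $\tau^{1 - 1/p}$.

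The main technical difficulty is the interplay between the logarithmic correction $\tfrac 12 \log(\log r)^2$ in $\phi$ and the dyadic upgrade. This correction plays two essential roles: it generates the weight $(r\log r)^{-1}$ on the left-hand side of \eqref{3.1}, and it keeps $\tau$ bounded away from the integer spectrum of the angular Laplacian so that the mode-by-mode $L^2$ estimate is uniform in the Fourier mode $k$. Ensuring that this correction survives the dyadic rescaling without degrading the exponent of $\tau$, and that all implicit constants depend only on $p$ and $R_0$, is where most of the real work is concentrated. Since a closely analogous Carleman estimate appears in \cite[Theorem 3.1]{DLW19}, I would expect to adapt that argument directly to the weight $\phi(r) = \log r + \tfrac 12 \log(\log r)^2$ used here.
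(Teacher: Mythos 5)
Your setup (polar coordinates, Fourier decomposition in $\theta$, conjugation by $e^{-\tau\phi}$, and an integration-by-parts proof of the $p=2$ case) matches the paper's, but the step where you pass from $p=2$ to general $p\in(1,2)$ is where your proposal has a genuine gap. The paper does not use a dyadic-annulus/Hardy--Littlewood--Sobolev/interpolation argument. Instead it solves the conjugated first-order ODE $(\partial_t+\tau\varphi'(t)-k)P_kv=P_k\Lt v$ explicitly for each mode, choosing the direction of integration according to the sign of $k-\tau\varphi'(t)$, and obtains the $L^p\to L^2$ gain from two separate mechanisms: in the angular variable, a Bernstein/Hausdorff--Young type bound $\|\sum_{k=N}^{M}c_kP_kv\|_{L^2(S^1)}\le C(\sum|c_k|^2)^{1/p-1/2}\|v\|_{L^p(S^1)}$ (Lemma \ref{upDown}); in the radial variable, explicit bounds on the resulting convolution kernels followed by Young's inequality. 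The power $\tau^{-1+1/p}$ then comes from summing $k^{2/p-3}$ over $|k|\gtrsim\tau$ for the non-resonant modes, and from the $L^\sigma$-norm of the kernel $(1+|z|^{-\alpha})(1+\tau^{1/2}|z|)^{-1}$ for the resonant ones.

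Two concrete problems with your route. First, the resonant modes $k\approx\tau\varphi'(t)$ are not controlled by taking $\tau$ non-integer: the resulting spectral gap is only $O(1)$, so it produces neither a positive power of $\tau$ nor the weight $(r\log r)^{-1}$ on the left. Both come from $\varphi''(t)=-t^{-2}\neq 0$, i.e.\ from the second derivative of the $\tfrac12\log(\log r)^2$ correction, which in the paper manifests as the Gaussian factor $e^{-\frac{\tau}{2t^2}(s-t)^2}$ in the kernel for $N\le k\le M$; your proposal never explains how the dyadic/HLS argument would exploit this convexity. Second, freezing the weight $e^{-\tau\phi}$ on a dyadic annulus and applying HLS there ignores that $u$ is not $\bar\del^{-1}$ of the restriction of $\bar\del u$ to that annulus: the Cauchy transform couples all annuli, and the weight varies by factors of $e^{c\tau}$ between them, so the cross-annulus interactions must be estimated with exponential precision in $\tau$. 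Likewise, ``interpolating the two $\tau$-gains'' is not a defined operation here --- interpolation in the presence of the $\tau$-dependent weight does not interact with the $L^p$ scale in any standard way. Your closing remark that you would adapt \cite[Theorem 3.1]{DLW19} is the right instinct, and that is essentially what the paper does, but that adaptation replaces, rather than implements, the dyadic/HLS strategy you outline.
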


The technical proof of this theorem appears below in Section \ref{CarEstS}. For now, we use this Carleman estimate to prove the following lower bound, which is the main result of this section.

\begin{thm}
\label{lowerBndThm}
Let $a \in \pb{1, 2}$.
Define $v = \abs{v_1} + \abs{v_2}$, where $\vec v$ is a $2$-vector solution to \eqref{vecEqn} in $B_a$ with $\norm{G}_{L^q\pr{B_a}} \le M$ for some $q \in \pb{2, \iny}$.
Assume that for some $t \in \pb{\frac{q}{q-1}, 2}$ and some $\hat c \le 1 \le \hat C$, $\norm{v}_{L^t\pr{B_1}} \ge \hat c$ and $\norm{v}_{L^t\pr{B_a}} \le \hat C.$
Then for any $r_0$ sufficiently small and any $b \in \pr{1, a}$, it holds that
\begin{align*}
\norm{v }_{L^t\pr{B_{r_0}}}
&\ge r_0^{C \pr{1+ M^{\mu} } + c \log\pr{\frac{C \hat C}{\hat c}}/\log b},
\end{align*}
where $\mu = \frac{tq}{tq - q-t}$, $C = C\pr{q, t, R_0}$, and $c$ is universal.
\end{thm}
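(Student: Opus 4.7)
The plan is to derive a three-ball inequality from Theorem~\ref{Carlp2} and then invert it to produce the desired lower bound. Select the Carleman exponent $p \in (1, 2)$ via $\tfrac{1}{p} = \tfrac{1}{q} + \tfrac{1}{t}$, so that $\beta = 1 - \tfrac{1}{p}$ gives $\tfrac{1}{\beta} = \mu = \tfrac{tq}{tq-t-q}$, matching the exponent in the statement; our hypotheses on $q$ and $t$ ensure $p \in (1, 2)$. Since \eqref{3.1} is stated on $B_{R_0}$ with $R_0 < 1/e$ while the radii of interest $r_0 < 1 < b \le a$ lie partly outside, rescale the equation by $\lambda = b/R_0$ so that $B_b$ corresponds to $B_{R_0}$; the rescaled system $\bar\partial \tilde{\vec v} = \tilde G \tilde{\vec v}$ satisfies $\|\tilde G\|_{L^q(B_{R_0})} \le C(b, R_0, q)\, M$.

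Introduce a smooth radial cutoff $\chi$ compactly supported in $B_{R_0}\setminus\{0\}$, equal to $1$ on a middle annulus that contains the rescaled image of $B_1$, with $|\bar\partial \chi|$ controlled by the reciprocal of the widths of the inner and outer boundary annuli. Apply Theorem~\ref{Carlp2} componentwise to $\chi\tilde{\vec v}$ and sum over the components; using $\bar\partial(\chi \tilde{\vec v}) = \chi \tilde G \tilde{\vec v} + (\bar\partial \chi)\tilde{\vec v}$, H\"older with exponents $\tfrac{1}{p}=\tfrac{1}{q}+\tfrac{1}{t}$ bounds the $\tilde G$-contribution on the right-hand side by
\begin{equation*}
CM \bigl\| r^{1-2/p}(\log r)\, e^{-\tau\phi}\, \chi\, \tilde v \bigr\|_{L^t}.
\end{equation*}
A second H\"older, with $\tfrac{1}{s}=\tfrac{1}{t}-\tfrac{1}{2}$, rewrites this as $C'(R_0)\, M \bigl\|(r\log r)^{-1} e^{-\tau\phi} \chi \tilde v\bigr\|_{L^2}$, using that $\bigl\|r^{2-2/p}(\log r)^2\bigr\|_{L^s(B_{R_0})}$ is a finite constant depending only on $R_0$. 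Absorb into the LHS of \eqref{3.1} by imposing $\tau^\beta \ge 2 C'(R_0)\, M$, equivalently $\tau \ge C M^\mu$.

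The residual inequality involves only the cutoff-derivative term, which is supported on two thin annuli near the inner and outer boundaries of $\supp\chi$. Bound the LHS from below by restricting to a neighborhood of the intermediate radius (minimum of $(r|\log r|)^{-1}e^{-\tau\phi}$ there) and bound the RHS from above using the maximum of $e^{-\tau\phi}$ on each boundary annulus; convert the $L^p$ norms of $\tilde v$ to $L^t$ norms via local H\"older, paying polynomial factors in the annular radii. Undoing the rescaling yields
\begin{equation*}
\hat c \le \|v\|_{L^t(B_1)} \le \mathcal C \bigl( r_0^{-\tau - c_1}\, \|v\|_{L^t(B_{r_0})} + b^{-\tau - c_1}\, \hat C \bigr), \qquad \tau \ge CM^\mu,
\end{equation*}
where $\mathcal C$ and $c_1$ are controlled constants (depending on $q, t, R_0, b$ but not $r_0$). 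Choose $\tau = \max\bigl(CM^\mu,\; c\log(C\hat C/\hat c)/\log b\bigr)$ so that the second RHS term is at most $\hat c/2$; solving for $\|v\|_{L^t(B_{r_0})}$ and absorbing the prefactor $\hat c/(2\mathcal C)$ into the exponent (valid for $r_0$ sufficiently small) gives
\begin{equation*}
\|v\|_{L^t(B_{r_0})} \ge r_0^{\,C(1 + M^\mu)\,+\,c\log(C\hat C/\hat c)/\log b},
\end{equation*}
as claimed.

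The principal technical obstacle is the absorption step. The two-step H\"older chain $L^p \to L^t \to L^2$ must be executed so that the weight on the right after the final H\"older exactly reproduces the LHS weight $(r\log r)^{-1} e^{-\tau\phi}$; this tight matching is what forces the sharp absorption threshold $\tau \gtrsim M^\mu$ and hence the appearance of $M^\mu$ in the final bound. Equally important is that the intermediate constant $C'(R_0)$ is independent of the small inner radius so that the threshold $\tau \ge CM^\mu$ holds uniformly as $r_0 \to 0$.
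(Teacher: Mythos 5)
Your argument is correct and follows essentially the same route as the paper: the same three-annulus cutoff rescaled into $B_{R_0}$, the same application of Theorem~\ref{Carlp2} with $\tfrac1p=\tfrac1q+\tfrac1t$ so that $\beta^{-1}=\mu$, the same H\"older absorption threshold $\tau\gtrsim M^{\mu}$, and the same three-ball inequality. The only cosmetic differences are that you match the absorbed term at the $L^2$ level via a second H\"older (rather than first lowering the Carleman left-hand side to $L^t$; both work since $t\le 2$ and $2-2/p>0$, so the weight $r^{2-2/p}(\log r)^2$ is bounded), and that you close the three-ball inequality by taking $\tau=\max\pr{CM^{\mu},\,c\log(C\hat C/\hat c)/\log b}$ instead of balancing the two right-hand terms and splitting into cases as the paper does --- both choices yield the identical exponent.
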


\begin{rem}
The theorem gives the best result (i.e. minimizes $\mu$) when we choose $t = 2$.
However, for technical reasons, there will be situations where we need $t < 2$.
Therefore, we present the very general result and choose $t$ appropriately in the proofs of our order of vanishing theorems. 
\end{rem}

\begin{proof}
Choose $r_0$ sufficiently small and $b \in \pr{1, a}$.
Let $K_1 = \set{r_0/2 \le \abs{{z}} \le r_0}$, $K_2 = \set{r_0 \le \abs{{z}} \le b}$, and $K_3 = \set{b \le \abs{{z}} \le a}$.
Set $K = K_1 \cup K_2 \cup K_3 \su B_{a} \setminus \set{0}$ and define $\chi \in C^\iny_0\pr{K}$ where $\chi \equiv 1$ on $K_2$ and $\supp \gr \chi = K_1 \cup K_3$.
Define $\vec u = \chi \vec v$, where $\vec v$ is the solution to $\bar \del \vec{v} = G \vec{v}$.

Since $q \in \pb{2, \iny}$, then for any $t \in \pb{\frac{q}{q-1}, 2}$ we have that $p := \frac{q t}{q+t} \in \pb{1, 2}$.
For each $j$, set $\tilde u_j\pr{z} = u_j\pr{\frac a {R_0} z}$ so that $\supp \tilde u_j \su B_{R_0} \setminus \set{0}$.
Then we may apply the Carleman estimate described by Theorem \ref{Carlp2} with $p$ as chosen to each $\tilde u_j$.
With $\tilde u = \abs{\tilde u_1} + \abs{\tilde u_2}$ and $\tilde K = \frac {R_0} a K {\su B_{R_0} \setminus \set{0}}$, we see that
\begin{align*}
\tau^\be \norm{\pr{r \log r}^{-1} e^{-\tau \phi(r)} \tilde u }_{L^2\pr{\tilde K}} 
&\le \tau^\be \sum_{j=1,2} \norm{\pr{r \log r}^{-1} e^{-\tau \phi(r)} \tilde u_j }_{L^2\pr{\tilde K}} \\
&\le C \sum_{j=1,2} \norm{r^{1 - 2/p} \pr{\log r} e^{-\tau \phi(r)} \bar \del \tilde u_j }_{L^p\pr{\tilde K}},
\end{align*}
where {$r = \abs{z}$ and} $\be = 1 - \frac 1 p = 1 - \frac 1 t - \frac 1 q = \mu^{-1}$. 
Define $\rho\pr{z} = \frac{R_0}{a} \abs{z} = \frac{R_0}{a} r$.
An application of H\"older (since $t \le 2$) and a change of variables shows that
\begin{align}
\label{CarEstRes}
\tau^\be \norm{\pr{{\rho} \log {\rho}}^{-1} e^{-\tau \phi({\rho})} u }_{L^t\pr{K}} 
&\le C \sum_{j=1,2} \norm{{\rho}^{1 - 2/p} \pr{\log {\rho}} e^{-\tau \phi({\rho})} \bar \del u_j }_{L^p\pr{K}},
\end{align}
where $C$ depends on $q$, $t$, $R_0$.

Note that by \eqref{vecEqn}
\begin{align*}
\bar \del u_j 
&= \bar \del \chi v_j + \chi \bar \del v_j
= \bar \del \chi v_j  + \chi \sum_{k=1,2} g_{j k} v_{k} 
= \bar \del \chi v_j  + \sum_{k=1,2} g_{j k} u_{k} .
\end{align*}
This equation combined with H\"older's inequality shows that for each $j = 1, 2$,
\begin{align*}
&\norm{{\rho}^{1 - 2/p} \pr{\log {\rho}} e^{-\tau \phi\pr{{\rho}}} \bar \del u_j}_{L^p\pr{K}} \\
\le& \sum_{k=1,2}\norm{{\rho}^{1 - 2/p} \pr{\log {\rho}} e^{-\tau \phi\pr{{\rho}}} g_{j k} u_{k}}_{L^p\pr{K}} 
+ \norm{{\rho}^{1 - 2/p} \pr{\log {\rho}} e^{-\tau \phi\pr{{\rho}}} \abs{\gr \chi} v_j}_{L^p\pr{K_1 \cup K_3}} \\
\le& \sum_{k=1,2} \norm{g_{j  k} }_{L^q\pr{K}} \norm{{\rho}^{1 - 1/p} \pr{\log {\rho}}}^2_{L^\iny\pr{K}} \norm{\pr{{\rho} \log {\rho}}^{-1} e^{-\tau \phi\pr{{\rho}}} u_k}_{L^t\pr{K}} \\
+& \norm{{\rho} \abs{\gr \chi}}_{L^\iny\pr{K_1}} \norm{{\rho}^{- 2/q} }_{L^q\pr{K_1}} \norm{{\rho}^{-2/t} \pr{\log {\rho}} e^{-\tau \phi\pr{{\rho}}} v_j}_{L^t\pr{K_1}} \\
+& \norm{\gr \chi}_{L^\iny\pr{K_3}} \norm{{\rho}^{1-2/q} }_{L^q\pr{K_3}} \norm{{\rho}^{-2/t} \pr{\log {\rho}} e^{-\tau \phi\pr{{\rho}}} v_j}_{L^t\pr{K_3}} . 
\end{align*}
A computation shows that$\norm{{\rho}^{1 - 1/p} \pr{\log {\rho}}}^2_{L^\iny\pr{K}}$, $\norm{{\rho} \abs{\gr \chi}}_{L^\iny\pr{K_1}}$, $\norm{{\rho}^{- 2/q} }_{L^q\pr{K_1}}$, and $\norm{\gr \chi}_{L^\iny\pr{K_3}} \norm{{\rho}^{1-2/q} }_{L^q\pr{K_3}}$ are bounded by constants depending on {$R_0$ and} $q$.
Combining the previous inequality with \eqref{CarEstRes} then shows that
\begin{align*}
\tau^\be \norm{\pr{{\rho} \log {\rho}}^{-1} e^{-\tau \phi({\rho})} u }_{L^t\pr{K}} 
&\le C M \norm{\pr{{\rho} \log {\rho}}^{-1} e^{-\tau \phi\pr{{\rho}}} u}_{L^t\pr{K}}
+ C  \norm{{\rho}^{-2/t} \pr{\log {\rho}} e^{-\tau \phi\pr{{\rho}}} v}_{L^t\pr{K_1 \cup K_3}}.
\end{align*}
If $\tau \ge \pr{2 C M}^{\mu}$, then the first term may be absorbed into the left to get
\begin{align*}
\norm{e^{-\pr{\tau+1} \phi({\rho})} v }_{L^t\pr{K_2}}
\le& \norm{e^{-\pr{\tau+1} \phi({\rho})} \chi v }_{L^t\pr{K}}
\le \norm{e^{-\pr{\tau+1} \phi({\rho})} u }_{L^t\pr{K}} \\
\le& C \rho_0^{1 - 2/t}\pr{\log {\rho_0}}^2 \norm{e^{-\pr{\tau+1} \phi\pr{{\rho}}} v}_{L^t\pr{K_1}}
+ C \pr{\log R_0}^2 \norm{e^{-\pr{\tau+1} \phi\pr{{\rho}}} v}_{L^t\pr{K_3}},
\end{align*}
where we have used the definition of $\phi$ and introduced $\rho_0 := {R_0 r_0}/ a$.
Replacing $\tau+1$ with $\tau$ and assuming that $\tau \ge C\pr{1 + M^{\mu}}$, it holds that
\begin{align*}
\norm{ v }_{L^t\pr{\set{r_0 \le \abs{x} \le 1}}} 
\le& e^{\tau \phi(R_0/a)}  \norm{e^{-\tau \phi(\rho)} v }_{L^t\pr{K_2}}  \\
\le& C e^{\tau \phi(R_0/a)} \brac{\rho_0^{1 - 2/t} \pr{\log \rho_0}^2 \norm{e^{-\tau \phi\pr{\rho}} v }_{L^t\pr{K_1}}
+\pr{\log R_0}^2\norm{e^{-\tau \phi\pr{\rho}} v }_{L^t\pr{K_3}}} \\
\le& C \rho_0^{1 - 2/t} \pr{\log \rho_0}^2 \frac{e^{\tau \phi(R_0/a)}}{e^{\tau \phi\pr{\rho_0/2}}} \norm{v }_{L^t\pr{K_1}}
+ C\pr{\log R_0}^2 \frac{e^{\tau \phi(R_0/a)}}{e^{\tau \phi\pr{R_0 b/a}}} \norm{v }_{L^t\pr{K_3}}.
\end{align*}
Adding $\norm{v}_{L^t\pr{B_{r_0}}}$ to both sides of the inequality shows that
\begin{align*}
\norm{ v }_{L^t\pr{B_{1}}} 
\le& C \rho_0^{1 - 2/t} \pr{\log \rho_0}^2 e^{\tau\pr{\phi(R_0/a)-\phi\pr{\rho_0/2}}} \norm{v }_{L^t\pr{B_{r_0}}}
+ C\pr{\log R_0}^2 e^{\tau\pr{\phi(R_0/a)-\phi\pr{R_0b/a}}} \norm{v }_{L^t\pr{B_{a}}}.
\end{align*}
Define $\kappa = \frac{\phi\pr{R_0 b/a} - \phi\pr{R_0/a}}{\phi\pr{R_0 b/a} - \phi\pr{\rho_0/2}}$ and set
$$\tau_0 = \frac{\kappa}{\phi\pr{R_0 b/a} - \phi\pr{R_0/a}} \log\brac{\frac{\pr{\log R_0}^2\norm{v }_{L^t\pr{B_{a}}}}{\rho_0^{1 - 2/t}\pr{\log \rho_0}^2\norm{v }_{L^t\pr{B_{r_0}}}}}.$$
If $\tau_0 \ge C\pr{1 + M^{\mu}}$, then the above computations are valid with this choice of $\tau$ and we see that
\begin{align*} 
\norm{ v }_{L^t\pr{B_{1}}} 
\le& C \brac{\rho_0^{1 - 2/t}\pr{\log \rho_0}^2\norm{v }_{L^t\pr{B_{r_0}}}}^\kappa \brac{\pr{\log R_0}^2 \norm{v }_{L^t\pr{B_{a}}}}^{1 - \kappa}.
\end{align*}
On the other hand, if $\tau_0 < C\pr{1 + M^{\mu}}$, then
\begin{align*}
\norm{v }_{L^t\pr{B_1}}
\le \norm{v }_{L^t\pr{B_{a}}}
&\le \exp\brac{C\pr{1+ M^{\mu} }\pr{\phi\pr{R_0 b/a} - \phi\pr{\rho_0/2}}} {\rho_0}^{1 - 2/t} \pr{\frac{\log \rho_0}{\log R_0}}^2 \norm{v }_{L^t\pr{B_{r_0}}}.
\end{align*}
Adding the previous two inequalities and invoking the assumptions that $\hat c \le \norm{v }_{L^t\pr{B_1}}$ and $\norm{v }_{L^t\pr{B_a}} \le \hat C$ shows that
\begin{align*}
\hat c &\le {\rm I} + \Pi,
\end{align*}
where
\begin{align*}
{\rm I} &= C \brac{\rho_0^{1 - 2/t}\pr{\log \rho_0}^2\norm{v }_{L^t\pr{B_{r_0}}}}^\kappa \brac{ \pr{\log R_0}^2 \hat C}^{1 - \kappa} \\
\Pi &=  \exp\brac{C\pr{1+ M^{\mu} }\pr{\phi\pr{R_0 b/a} - \phi\pr{\rho_0/2}}}\rho_0^{1 - 2/t} \pr{\frac{\log \rho_0}{\log R_0}}^2 \norm{v }_{L^t\pr{B_{r_0}}}.
\end{align*}

On one hand, if ${\rm I} \leq \Pi$, then $\hat c \le 2 \Pi$ so that
\begin{align*}
\norm{v }_{L^t\pr{B_{r_0}}}
&\ge \frac {\hat c}{2}{\rho_0}^{2/t-1} \pr{\frac{\log R_0}{\log \rho_0}}^2  \exp\brac{C\pr{1+ M^{\mu} }\pr{\phi\pr{\rho_0/2} - \phi\pr{R_0 b/a} }} 
\end{align*}
Assuming that $r_0 \ll R_0$,
$$\phi\pr{\rho_0/2} - \phi\pr{R_0 b/a} \ge c \log r_0$$
and then
\begin{align}
\label{firstBound}
\norm{v }_{L^t\pr{B_{r_0}}}
&\ge C \hat c (\log R_0)^2 r_0^{C \pr{1+  M^{\mu} }}.
\end{align}

On the other hand, if $\Pi \leq {\rm I}$, then
\begin{align*}
\hat c
&\le 2 C \brac{\rho_0^{1 - 2/t} \pr{\log \rho_0}^2\norm{v }_{L^t\pr{B_{r_0}}}}^\kappa \brac{  \pr{\log R_0}^2 \hat C}^{1 - \kappa}.
\end{align*}
Raising both sides to $\frac{1}{\kappa}$ shows that
\begin{align*}
\norm{v }_{L^t\pr{B_{r_0}}} 
&\ge \hat C \rho_0^{2/t-1} \pr{\frac{\log R_0}{\log \rho_0}}^2 \brac{\frac{2C\hat C \pr{\log R_0}^2} {\hat c}}^{-1/\kappa}.
\end{align*}
As above, for any $r_0 \ll R_0$, $- \frac 1 \kappa =  \frac{\phi\pr{\rho_0/2} - \phi\pr{R_0 b/a}}{\phi\pr{R_0 b/a} - \phi\pr{R_0/a}} \ge \frac{c \log r_0}{\log b}$ and then
\begin{align}
\norm{v }_{L^t\pr{B_{r_0}}} 
&\ge \hat{C} (\log R_0)^2 r_0^{c \log \left[\frac{2C\hat C  \pr{\log R_0}^2}{\hat c}\right]/\log b}.
\label{secondBound}
\end{align}
Combining \eqref{firstBound} and \eqref{secondBound} leads to the conclusion of Theorem~\ref{lowerBndThm}.
\end{proof}

\section{Order of Vanishing Estimates}
\label{OofVS}

This section contains the proofs of our order of vanishing results, Theorem \ref{OofV1} in the introduction and Theorem \ref{OofV0} below.
The idea underlying our proofs is that we can reduce the PDE given in \eqref{ePDE} to a first-order Beltrami equation.
The novelty here is that the resulting equation is a vector equation instead of a scalar equation as it was in \cite{KSW15} and \cite{KW15}.
More specifically, we will show that the elliptic PDE described by \eqref{ePDE} is equivalent to an equation of the form \eqref{vecEqn}. 

If $u = u_1 + i u_2$, then the drift equation \eqref{ePDE} is equivalent to the system
\begin{equation}
\label{PDESystem}
\left\{ \begin{array}{l}  
\LP u_1 = W_1 \cdot \gr u_1 - W_2 \cdot \gr u_2 \\
\LP u_2 = W_1 \cdot \gr u_2 + W_2 \cdot \gr u_1. 
\end{array}\right.
\end{equation}
Recall that $\bar \del = \frac{\del}{\del \bar z} = \frac{1}{2}\pr{ \der{}{x} + i \der{}{y} }$ and $\del = \frac{\del}{\del  z} = \frac{1}{2}\pr{ \der{}{x} - i \der{}{y} }$.
Using the natural association between $2$-vectors and complex values, i.e. $\pr{a, b} \sim a + ib$, we define
\begin{align*}
W_k\pr{u_j} = \left\{\begin{array}{ll} \frac 1 4 \pr{W_k + \ol{W_k}\frac{\bar \del u_j}{\del u_j}} & \text{ if } \del u_j \ne 0 \\ 0 & \text{ otherwise } \end{array}  \right.
\end{align*}
so that
$$4 W_k\pr{u_j} \del u_j = W_k \del u_j + \ol{W_k} \bar \del u_j =2 \Re W_k \del u_j = W_k \cdot \gr u_j.$$
Then the system \eqref{PDESystem} may be rewritten as
\begin{align*}
\left\{ \begin{array}{l}  
\bar \del \del u_1 - W_1\pr{u_1} \del u_1 = - W_2\pr{u_2} \del u_2 \\
\bar \del \del u_2 - W_1\pr{u_2} \del u_2 = W_2\pr{u_1} \del u_1. 
\end{array}\right.
\end{align*}
If we define 
\begin{equation}
\label{vGDef0}
\vec{v} = \brac{\begin{array}{c} \del u_1 \\ \del u_2 \end{array}} \quad \text{ and } \quad 
G = \brac{\begin{array}{cc} W_1\pr{u_1} & - W_2\pr{u_2} \\ W_2\pr{u_1}  & W_1\pr{u_2} \end{array}},
\end{equation}
then the system of equations described by \eqref{PDESystem} is equivalent to \eqref{vecEqn}.

The following theorem is an alternative order of vanishing estimate.
Although Theorem \ref{OofV1} is our main order of vanishing estimate, we will use the following result to initiate the proof of Theorem \ref{LandisThm} in the setting where $q_1 > 2$.
This proof is also interesting because it demonstrates how we make use of the Beltrami representation in a simpler setting.

\begin{thm}
\label{OofV0}
Assume that for some $q \in \pb{2, \iny}$, $\norm{W}_{L^{q}\pr{B_2}} \le M$.
Let $u$ be a solution to \eqref{ePDE} in $B_2$ that satisfies \eqref{localBd} with $d = 2$ and \eqref{localNorm}.
Then for any $r$ sufficiently small, 
\begin{equation}
\norm{\gr u}_{L^2\pr{B_r}} \ge r^{C \pr{1+ M^{\mu} } + c \log \brac{\frac{C\hat C\pr{1 + M}}{\hat c}}},
\label{localEst10}
\end{equation}
where $\mu = \frac{2q}{q-2}$, $C = C\pr{q, R_0}$. 
\end{thm}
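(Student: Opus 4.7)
My plan is to reduce \eqref{ePDE} to a vector-valued Beltrami system, exactly as done at the start of Section \ref{OofVS}, and then apply Theorem \ref{lowerBndThm} directly. Writing $u = u_1 + i u_2$ and $W = W_1 + i W_2$ with all $u_j, W_j$ real-valued, \eqref{ePDE} is equivalent to the system \eqref{PDESystem}. I set $\vec v = (\del u_1, \del u_2)^T$ and let $G$ be the $2 \times 2$ matrix in \eqref{vGDef0}, so that $\bar\del \vec v = G\vec v$. Since each $u_j$ is real, $|\bar\del u_j / \del u_j| = 1$ wherever the denominator is nonzero, hence $|W_k(u_j)| \le \tfrac{1}{2}|W_k| \le \tfrac{1}{2}|W|$ pointwise. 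This gives $\norm{G}_{L^q(B_2)} \le M/2$. Moreover, since the $u_j$ are real, $|\del u_j| = \tfrac{1}{2}|\gr u_j|$, so $v := |v_1| + |v_2|$ is pointwise comparable to $|\gr u|$.

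To invoke Theorem \ref{lowerBndThm} I need an $L^2$ upper bound for $v$ on a ball strictly inside $B_2$. I will obtain this via a standard Caccioppoli estimate: multiplying \eqref{ePDE} by $\bar u \eta^2$ for a cutoff $\eta \in C^\iny_c(B_2)$ with $\eta \equiv 1$ on $B_a$ (say $a = 7/4$), integrating by parts, taking real parts, and applying H\"older's inequality with exponents $q$ and $q/(q-1)$ to the drift term (together with $\norm{W}_{L^q(B_2)} \le M$ and $\norm{u}_{L^\iny(B_2)} \le \hat C$) yields, after absorbing the resulting $\norm{\eta \, \gr u}_{L^2}^2$ term into the left, the estimate
\begin{equation*}
\norm{\gr u}_{L^2(B_a)}^2 \le C \hat C^2 (1 + M^2),
\end{equation*}
so that $\norm{v}_{L^2(B_a)} \le C(1+M)\hat C$. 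The normalization \eqref{localNorm} together with $v \sim |\gr u|$ also gives $\norm{v}_{L^2(B_1)} \gtrsim \hat c$.

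I then apply Theorem \ref{lowerBndThm} with $t = 2$ (so that $\mu = \frac{tq}{tq - q - t} = \frac{2q}{q-2}$, matching the exponent in \eqref{localEst10}), the value $a$ chosen above, and a fixed intermediate radius $b \in (1,a)$. The conclusion reads
\begin{equation*}
\norm{v}_{L^2(B_{r_0})} \ge r_0^{C(1 + M^\mu) + c \log\pr{C(1+M)\hat C / \hat c}/\log b}.
\end{equation*}
Since $b$ is a universal constant, $\log b > 0$ can be absorbed into $c$, and the pointwise comparability $v \sim |\gr u|$ converts the bound into \eqref{localEst10}. The Carleman machinery is entirely encapsulated in Theorem \ref{lowerBndThm} (which rests on Theorem \ref{Carlp2}), so the only nontrivial computational ingredient beyond that is the Caccioppoli estimate, which is genuinely mild here because $q > 2$ and $u$ is bounded.
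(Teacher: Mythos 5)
Your proposal is correct and follows essentially the same route as the paper: reduce to the Beltrami system \eqref{vecEqn} via \eqref{vGDef0}, bound $\norm{G}_{L^q}$ by $M$ using $\abs{W_k\pr{u_j}} \lesssim \abs{W_k}$, get the upper bound on $\norm{v}_{L^2}$ on an intermediate ball from the Caccioppoli/interior estimate \eqref{intEst}, and apply Theorem \ref{lowerBndThm} with $t=2$. The only differences are cosmetic (your choice $a=7/4$ versus the paper's $a=3/2$, and your explicit derivation of the Caccioppoli bound that the paper simply quotes as \eqref{intEst}).
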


\begin{rem}
An application of the Cacciopoli inequality as in \eqref{intEst} below allows us to replace the $L^2$-norm of the gradient on the lefthand side with the $L^\iny$-norm of the function itself.
After such a reduction, this result is essentially the same as the order of vanishing result from \cite[Corollary 1]{DZ19}.
The proof that we present here is different.
\end{rem}

\begin{rem}
Consider the case with $q = \iny$.
Then $\mu = 2$ and we obtain the well-known order of vanishing estimate for drift equations, see for example \cite{Dav14}.
\end{rem}

\begin{rem}
This theorem differs from Theorem \ref{OofV1} and, at first glance, it may appear that this theorem is stronger because of the absence of an exponential dependence in the bound.
However, this theorem doesn't cover the case of $q_1 = 2$.
Moreover, if $M_2 \ll M_1$, then the bound that we obtain in Theorem \ref{OofV1} is better than this one.
In a sense, our new result may be interpreted as a perturbation of the order of vanishing results for real-valued solutions to drift equations that appeared in \cite{KW15}.
This theorem holds for complex-valued equations.
\end{rem}

\begin{proof}
If we define $\vec v$ and $G$ as in \eqref{vGDef0}, then equation \eqref{vecEqn} holds in $B_2$.
With $v = \abs{v_1} + \abs{v_2}$, we see that $v \sim \abs{\gr u}$.
Therefore, it follows from \eqref{localNorm} that $\norm{v}_{L^2\pr{B_1}} \gtrsim \hat c$.
By the assumption on $W$ and the fact that $\abs{W_j\pr{u_k}\pr{z}} \le \abs{W_j\pr{z}}$ for all $z$,  we see that $\norm{G}_{L^q\pr{B_2}} \le C M$.
A standard integration by parts argument shows that whenever $\LP u = W \cdot \gr u$ in $B_{R}$, where $W \in L^q\pr{B_R}$ for some $q \in \brac{2, \iny}$,
\begin{equation}
\label{intEst}
\norm{\gr u}_{L^2\pr{B_r}} \le C \brac{\pr{1-\frac r R}^{-1/2} + R^{1 - \frac 2 q}\norm{W}_{L^q\pr{B_{R}}}} \norm{u}_{L^\iny\pr{B_{R}}}.
\end{equation}
Combining \eqref{intEst} with \eqref{localBd} then implies that $\norm{v}_{L^2\pr{B_{3/2}}} \le \hat C \pr{1 + M}$.
An application of Theorem~\ref{lowerBndThm} with $t = 2$ and $a = 3/2$ shows that
\begin{align*}
\norm{\gr u}_{L^2\pr{B_{r}\pr{x_0}}}
\gtrsim \norm{v }_{L^2\pr{B_{r}\pr{x_0}}}
&\ge r^{C \pr{1+ M^{\mu} } + c \log \brac{\frac{C\hat C\pr{1 + M}}{\hat c}}},
\end{align*}
as required.
\end{proof}

Returning to the Beltrami system from \eqref{vGDef0} and the preceding line, we take an alternative approach and define 
\begin{equation}
\label{vjDef}
v_j = \del u_j e^{-T\pr{W_1\pr{u_j}}} \quad \text{for each} \quad j = 1, 2,
\end{equation}
where we use the notation $T = T_{B_d}$ to denote the Cauchy-Pompeiu operator on $B_d$.
Then
\begin{align*}
\bar \del v_j 
&= \bar \del \pr{\del u_j e^{-T\pr{W_1\pr{u_j}}}}
= \brac{\bar \del \del u_j - W_1\pr{u_j} \del u_j} e^{-T\pr{W_1\pr{u_j}}} \\
&= \pr{-1}^j W_2\pr{u_{\hat j}} \del u_{\hat j} e^{-T\pr{W_1\pr{u_j}}}
= \pr{-1}^j W_2\pr{u_{\hat j}} e^{T\brac{W_1\pr{u_{\hat j}}- W_1\pr{u_j}}} v_{\hat j},
\end{align*}
where $\hat j = j \pm 1$.
If we introduce the vector notation
\begin{equation}
\label{vGDef}
\vec{v} = \brac{\begin{array}{c} v_1 \\ v_2 \end{array}} \quad \text{ and } \quad 
G = \brac{\begin{array}{cc} 0 & - W_2\pr{u_2} e^{T\brac{W_1\pr{u_2}- W_1\pr{u_1}}}  \\ W_2\pr{u_1} e^{-T\brac{W_1\pr{u_{2}}- W_1\pr{u_1}}}  & 0 \end{array}},
\end{equation}
then \eqref{vecEqn} holds.
This is the representation that will be used in the proof of our order of vanishing estimate described by Theorem \ref{OofV1}.

Before proving that theorem, we establish an $L^q$-bound for the matrix $G$ given in \eqref{vGDef}.
To do this, we have to recall some properties of $T$. Let $\om \in L^q$ for some $q \in \brac{2, \iny}$ satisfy $\norm{\om}_{L^q\pr{B_d}} \le M$.
The Cauchy-Pompeiu transform of $\om$ is defined as
\begin{equation*}
T {\om} (z)=\frac{1}{\pi}\int_{B_d}\frac{\om(\xi)}{\xi-z}d\xi.
\end{equation*}
If $q > 2$, then $T\pr{\om} \in L^\iny$ with $\norm{T{\om}}_{L^\iny\pr{B_d}} \le C M$, where $C$ depends on $q$ and $d$.
Otherwise, if $q = 2$, then $T\pr{\om} \in W^{1,2}$ with
$$\norm{T {\om}}_{W^{1,2}\pr{B_d}}=\norm{T {\om}}_{L^2\pr{B_d}}+ \norm{\gr {T {\om}}}_{L^2\pr{B_d}}\le CM.$$
For further analysis of $T\om$ in the setting where $q = 2$, we recall the following lemma from \cite{KW15}.

\begin{lem}[cf. Lemma 3.3 in \cite{KW15}]
\label{lemma0701}
Set $h = T \om$ for some $\om \in L^2\pr{B_d}$ with $\norm{\om}_{L^2\pr{B_d}} \le M$.
For $s>0$ and $0<r\le d$, it holds that
\begin{equation}
\label{qqest}
\fint_{B_r}\exp(s|h|)\le Cr^{-sCM}\exp(sCM+s^2CM^2),
\end{equation}
where we denote $\disp \fint_{B_r} f=|B_r|^{-1}\int_{B_r}f$. 
\end{lem}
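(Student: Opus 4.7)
The plan is to split $h = h_{B_r} + (h - h_{B_r})$, with $h_{B_r} := \fint_{B_r} h$, and handle the two pieces by different tools: exponential integrability on the $W^{1,2}$ oscillation $h - h_{B_r}$, and a direct kernel estimate on the average $h_{B_r}$.

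For the oscillation, I would first observe that, since $\bar\del h = \om$ on $B_d$ and the Beurling transform $\del \circ T$ is bounded on $L^2$, we have $\norm{\gr h}_{L^2\pr{B_d}} \le CM$. The scale-invariant 2D Moser--Trudinger inequality for mean-zero $W^{1,2}$ functions on a ball --- apply the classical inequality on $B_1$ after rescaling $B_r$, which is harmless because $\norm{\gr \cdot}_{L^2}$ is scale-invariant in two dimensions --- then yields a universal $c > 0$ with
\begin{equation*}
\fint_{B_r} \exp\!\pr{\frac{c\,(h - h_{B_r})^2}{M^2}}\,dz \le C.
\end{equation*}
The elementary inequality $s|h - h_{B_r}| \le c(h - h_{B_r})^2/M^2 + s^2 M^2/(4c)$ immediately gives $\fint_{B_r} e^{s|h - h_{B_r}|}\,dz \le C\exp\pr{s^2 C M^2}$.

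For the average, Fubini and the triangle inequality yield
\begin{equation*}
|h_{B_r}| \le \frac{1}{\pi|B_r|}\int_{B_d} |\om(\xi)|\int_{B_r} \frac{dz}{|\xi-z|}\,d\xi.
\end{equation*}
Splitting the outer integral at distance $2r$ from the center of $B_r$, the inner integral is bounded by $Cr$ on the near region (by direct integration of the mild singularity) and by $C|B_r|/\dist(\xi,B_r)$ on the far region. Applying Cauchy--Schwarz on each piece then produces $|h_{B_r}| \le CM + CM\sqrt{\log(d/r)} \le CM(1 - \log r)$, using $\sqrt{x} \le 1 + x$ and absorbing $\log d$ into $C$. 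Exponentiating gives $e^{s|h_{B_r}|} \le r^{-sCM}\exp(sCM)$, and combining with the oscillation bound via $|h| \le |h_{B_r}| + |h - h_{B_r}|$,
\begin{equation*}
\fint_{B_r} e^{s|h|}\,dz \le e^{s|h_{B_r}|}\fint_{B_r} e^{s|h - h_{B_r}|}\,dz \le C\,r^{-sCM}\exp\pr{sCM + s^2 CM^2},
\end{equation*}
which is the claimed inequality.

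The main obstacle is justifying the mean-zero, scale-invariant Moser--Trudinger inequality on $B_r$ with a constant independent of $r$; the remaining ingredients (the $L^2$ estimate on $\gr h$, Young's inequality, and the kernel decomposition) are routine. Since $\norm{\gr u}_{L^2}$ is invariant under the 2D dilation $u \mapsto u(r\,\cdot)$, the classical Moser--Trudinger inequality on $B_1$ for $W^{1,2}$ functions of mean zero transfers directly to $B_r$ without loss of constant.
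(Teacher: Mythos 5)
Your proof is correct. Note that the paper itself offers no argument for this lemma --- it is quoted verbatim from \cite{KW15} (Lemma 3.3 there) --- so there is no internal proof to compare against; what you have written is a valid, self-contained derivation. The two-piece decomposition $h = h_{B_r} + (h - h_{B_r})$ is exactly the right way to see where each factor in \eqref{qqest} comes from: the mean contributes $r^{-sCM}\exp(sCM)$ via the kernel estimate $|h_{B_r}| \le CM\pr{1 + \log(1/r)}$ (your near/far splitting and Cauchy--Schwarz computations check out, using $d \le 2$ to absorb $\log d$; for $1 < r \le d$ one only needs to enlarge the constant in the $\exp(sCM)$ factor to compensate for $r^{-sCM} < 1$, which the statement permits), while the oscillation contributes $\exp(Cs^2M^2)$ via exponential-square integrability plus Young's inequality. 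The one ingredient that deserves an explicit reference or a short justification is the mean-zero (Neumann) Moser--Trudinger inequality on the disc: the textbook statement is for $W^{1,2}_0$, and the mean-zero version follows, e.g., by extending $h - h_{B_1}$ to $B_2$ with comparable gradient norm, multiplying by a cutoff, and controlling the cutoff term by Poincar\'e; your observation that the inequality then transfers from $B_1$ to $B_r$ with no loss, because $\norm{\gr \cdot}_{L^2}$ and $\fint$ are both dilation-invariant in two dimensions, is correct and is what produces a constant uniform in $r$. The gradient bound $\norm{\gr h}_{L^2} \le CM$ via $\bar\del h = \pm\om$ on $B_d$ and the $L^2$-boundedness of the Beurling transform is also correct. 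In short: a clean argument, quantitatively matching the cited estimate.
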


Now we can show that $G$ is bounded in $L^q$ for some $q \in \pb{2, q_2}$.

\begin{lem}
\label{GBound}
Assume that $d\in(1,2]$ and for some $q_1 \in \brac{2, \iny}$ and $q_2 \in \pb{2, \iny}$, $\norm{W_j}_{L^{q_j}\pr{B_d}} \le M_j$ for $j = 1,2$.
Define the matrix function $G$ as in \eqref{vGDef}.
Set $q = q_2$ if $q_1 > 2$ and otherwise choose $q \in \pr{2, q_2}$.
Then
\begin{align*}
\norm{G}_{L^q\pr{B_d}} \lesssim M_2 \exp\pr{C M_1^\al},
\end{align*}
where $\al = 1$ if $q_1 > 2$ and $\al = 2$ otherwise.
\end{lem}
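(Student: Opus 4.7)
The plan is to bound each nonzero entry of $G$ separately; by symmetry it suffices to estimate terms of the form $\|W_2(u_j) e^{\pm T[W_1(u_2) - W_1(u_1)]}\|_{L^q(B_d)}$. Using the pointwise bound $|W_k(u_j)(z)| \le |W_k(z)|$ (immediate from the definition of $W_k(u_j)$) together with $|e^{\pm f}| \le e^{|f|}$, each entry is dominated pointwise by $|W_2(z)| e^{|h(z)|}$, where $h := T\bigl[W_1(u_2) - W_1(u_1)\bigr]$ and $\|W_1(u_2) - W_1(u_1)\|_{L^{q_1}(B_d)} \le 2 M_1$. So the problem reduces to estimating $\|W_2 e^{|h|}\|_{L^q(B_d)}$ in terms of $M_1$ and $M_2$.

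For the easier case $q_1 > 2$, the mapping properties of the Cauchy-Pompeiu operator recalled before the lemma give $\|h\|_{L^\infty(B_d)} \le C M_1$ with $C = C(q_1, d)$, which is uniform for $d \in (1, 2]$. Consequently $e^{|h|} \le e^{C M_1}$ pointwise, and I can just pull the exponential outside the $L^{q_2}$-norm to obtain $\|W_2 e^{|h|}\|_{L^{q_2}(B_d)} \le M_2 e^{C M_1}$, which is the desired bound with $\alpha = 1$ and $q = q_2$.

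The substantive case is $q_1 = 2$, where $h$ is no longer bounded and one must pay a little integrability to use the John-Nirenberg-style exponential integrability of $h = T(\cdot)$ afforded by Lemma~\ref{lemma0701}. Fix $q \in (2, q_2)$ and apply H\"older's inequality with exponents $q_2/q$ and its conjugate $r' := q_2/(q_2 - q)$:
\begin{equation*}
\|W_2 e^{|h|}\|_{L^q(B_d)}^q \le \|W_2\|_{L^{q_2}(B_d)}^q \, \bigl\|e^{q|h|}\bigr\|_{L^{r'}(B_d)}.
\end{equation*}
Lemma~\ref{lemma0701} with $s = q r'$ and $r = d \in (1, 2]$ (so that $d^{-sCM_1} \le 1$) yields
\begin{equation*}
\int_{B_d} e^{q r' |h|} \le C\, \exp\bigl(q r' C M_1 + (q r')^2 C M_1^2\bigr),
\end{equation*}
and extracting the $1/r'$ power gives $\|e^{q|h|}\|_{L^{r'}(B_d)} \le C \exp(C q M_1 + C q^2 r' M_1^2)$. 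Combining, $\|W_2 e^{|h|}\|_{L^q(B_d)} \lesssim M_2 \exp(C M_1^2)$ (absorbing the linear-in-$M_1$ term), which is the claim with $\alpha = 2$.

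No real obstacle arises: the only slightly delicate point is ensuring the constants depending on $d$ stay bounded for $d \in (1, 2]$, which is handled because $d^{-sCM_1} \le 1$ in this range and the implicit constants from the Cauchy-Pompeiu bounds are increasing in $d$. I would close by summing over the two nonzero entries of $G$ and noting the resulting constants absorb into the $\lesssim$ in the stated inequality.
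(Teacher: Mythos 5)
Your proof is correct and follows essentially the same route as the paper: the same pointwise bounds $|W_k(u_j)|\le|W_k|$, the same $L^\infty$ bound on the Cauchy--Pompeiu transform when $q_1>2$, and for $q_1=2$ the same H\"older splitting (your exponent $qr'=\tfrac{qq_2}{q_2-q}$ is exactly the paper's $s$) followed by Lemma~\ref{lemma0701}. No further comment is needed.
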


\begin{proof}
Recall that $G_{jj} = 0$ and $G_{j \hat j} = \pr{-1}^{ j} W_2\pr{u_{\hat j}} e^{\pr{-1}^{\hat j}T\brac{W_1\pr{u_2}- W_1\pr{u_1}}}$.
Since $\abs{W_j\pr{u_k}\pr{z}} \le \abs{W_j\pr{z}}$ for all $z$, then $W_j \in L^{q_j}$ implies that $W_j\pr{u_k} \in L^{q_j}$ as well with the same norm.

If $q_1 > 2$, then
$$\norm{T\brac{W_1\pr{u_2}- W_1\pr{u_1}}}_{L^\iny\pr{B_d}} \le C M_1$$
and then
$$\norm{G}_{L^{q_2}\pr{B_d}} \le M_2 \exp\pr{C M_1}.$$
If $q_1 = 2$, choose $q \in \pr{2, q_2}$ and set $s = \frac{q q_2}{q_2 - q}$.
An application of the H\"older inequality shows that
\begin{align*}
\norm{G_{j \hat j}}_{L^{q}\pr{B_d}} 
&= \norm{W_2\pr{u_{\hat j}} e^{\pr{-1}^{\hat j}T\brac{W_1\pr{u_2}- W_1\pr{u_1}}}}_{L^{q}\pr{B_d}} 
\le \norm{W_2}_{L^{q_2}\pr{B_d}} \norm{e^{T\brac{W_1\pr{u_2}- W_1\pr{u_1}}}}_{L^{s}\pr{B_d}} \\
&\le C_s d^{2/s} M_2 \pr{ \fint_{B_d} \exp\pr{s\abs{T\brac{W_1\pr{u_2}- W_1\pr{u_1}}}}}^{1/s} \\
&\le C_{s} d^{-CM_1}M_2 \exp\pr{C M_1 +s C M_1^2},
\end{align*}
where the last step invokes Lemma~\ref{lemma0701}.
The conclusion follows.
\end{proof}

Now we prove the new order of vanishing estimate described by Theorem~\ref{OofV1}.

\begin{proof}[Proof of Theorem~\ref{OofV1}]
Define $\vec v$ and $G$ as in \eqref{vjDef} and \eqref{vGDef} so that equation \eqref{vecEqn} holds in $B_d$.
Choose $1 < b < a < d$ so that $b - 1 \simeq a - b \simeq d - a$.
Then $\log b \simeq \log d$ and $a - b \simeq d -1$.
Set $v = \abs{v_1} + \abs{v_2}$.
In order to keep track of the dependencies in the constants, we'll use a subscript notation within this proof.

Assume first that $q_1 > 2$.
We see from \eqref{localNorm} and H\"older's inequality that 
\begin{align*}
\hat c &\le \norm{\gr u}_{L^2\pr{B_1}}
\le \norm{\gr u_1}_{L^2\pr{B_1}} 
+ \norm{\gr u_2}_{L^2\pr{B_1}} 
= \norm{ e^{T\pr{W_1\pr{u_1}}} v_1}_{L^2\pr{B_1}} 
+ \norm{e^{T\pr{W_1\pr{u_2}}} v_2}_{L^2\pr{B_1}} \\
&\le \norm{ e^{T\pr{W_1\pr{u_1}}}}_{L^\iny\pr{B_1}} \norm{ v_1}_{L^2\pr{B_1}} 
+ \norm{e^{T\pr{W_1\pr{u_2}}}}_{L^\iny\pr{B_1}} \norm{v_2}_{L^2\pr{B_1}}
\le \exp\pr{C_{q_1} M_1} \norm{v}_{L^2\pr{B_1}}.
\end{align*}
It follows that $\norm{v}_{L^2\pr{B_1}} \ge \hat c \exp\pr{- C_{q_1} M_1}$. 
Similarly,
\begin{align*}
\norm{v}_{L^2\pr{B_a}}
&\le \norm{e^{-T\pr{W_1\pr{u_1}}} \gr u_1}_{L^2\pr{B_a}}
+ \norm{e^{-T\pr{W_1\pr{u_2}}} \gr u_2}_{L^2\pr{B_a}}
\le \exp\pr{C_{q_1} M_1} \norm{\gr u}_{L^2\pr{B_a}} \\
&\le \pr{\sqrt{\frac{d}{d-a}} + C_{q_1} M_1 + C_{q_2} M_2} \exp\pr{C_{q_1} M_1} \norm{u}_{L^\iny\pr{B_d}} 
\le \frac{\hat C \pr{1 + C_{q_2} M_2}}{\sqrt{d-1}} \exp\pr{C_{q_1} M_1} ,
\end{align*}
where we have applied the interior estimate described by \eqref{intEst} and the upper bound from \eqref{localBd}.
Since Lemma \ref{GBound} shows that $\norm{G}_{L^{q_2}\pr{B_d}} \le M_2 \exp\pr{C_{q_1} M_1}$, then an application of Theorem~\ref{lowerBndThm} with $t = 2$ shows that
\begin{align*}
\norm{v }_{L^2\pr{B_{r}\pr{x_0}}}
&\ge r^{C_{q_2} \set{1+ \brac{M_2 \exp\pr{C_{q_1} M_1}}^{\mu_2} } + \frac{c}{\log d} \set{C_{q_1} M_1 + \log\brac{\frac{C \hat C \pr{1 + C_{q_2} M_2}  }{\hat c \sqrt{d-1}}}}}.
\end{align*}
Since $\norm{v}_{L^2\pr{B_r}} \le \exp\pr{C_{q_1} M_1} \norm{\gr u}_{L^2\pr{B_r}}$, then we can rearrange to reach the conclusion of the theorem for the case $q_1 > 2$.

Now we consider $q_1 = 2$.
Choose $q \in \pr{2, q_2}$ and $t \in \pr{\max\set{\frac{q}{q-1}, t_0}, 2}$, then define $t' < \iny$ to satisfy $\frac 1 {t_0} = \frac 1 t + \frac 1 {t'}$.
It follows from the lower bound in \eqref{localNorm2} and H\"older's inequality that 
\begin{align*}
\hat c &\le \norm{\gr u}_{L^{t_0}\pr{B_1}}
\le \norm{\gr u_1}_{L^{t_0}\pr{B_1}} 
+ \norm{\gr u_2}_{L^{t_0}\pr{B_1}}  \\
&\le \norm{ e^{T\pr{W_1\pr{u_1}}}}_{L^{t'}\pr{B_1}} \norm{ v_1}_{L^t\pr{B_1}} 
+ \norm{e^{T\pr{W_1\pr{u_2}}}}_{L^{t'}\pr{B_1}} \norm{v_2}_{L^t\pr{B_1}}
\le \exp\pr{C_{t'} M_1^2} \norm{v}_{L^t\pr{B_1}},
\end{align*}
where we have applied Lemma \ref{lemma0701}.
Similarly,
\begin{align*}
\norm{v}_{L^t\pr{B_a}}
&\le \norm{e^{-T\pr{W_1\pr{u_1}}} \gr u_1}_{L^t\pr{B_a}}
+ \norm{e^{-T\pr{W_1\pr{u_2}}} \gr u_2}_{L^t\pr{B_a}}
\le \exp\pr{C_t M_1^2} \norm{\gr u}_{L^2\pr{B_a}} \\
&\le \pr{\sqrt{\frac d {d-a}} + C_{2} M_1 + C_{q_2} M_2} \exp\pr{C_t M_1^2} \norm{u}_{L^\iny\pr{B_d}}
\le \frac{\hat C \pr{1 + C_{q_2} M_2}}{\sqrt{d-1}} \exp\pr{C_t M_1^2}.
\end{align*}
Since Lemma \ref{GBound} implies that $\norm{G}_{L^{q}\pr{B_d}} \le M_2 \exp\pr{C_{q, q_2} M_1^2}$, then an application of Theorem~\ref{lowerBndThm} with our choice of $t$ shows that
\begin{align*}
\norm{v }_{L^t\pr{B_{r}\pr{x_0}}}
&\ge r^{C_{q,t} \set{1+ \brac{M_2 \exp\pr{C_{q, q_2} M_1^2}}^{\mu} } + \frac{c}{\log d} \set{C_{t,t_0} M_1^2 + \log \brac{\frac{C_{q,t} \hat C\pr{1 + C_{q_2}M_2}}{ \hat c \sqrt{d-1}}}}},
\end{align*}
where $\mu = \frac{tq}{tq - q-t}$.
Since $\norm{v }_{L^t\pr{B_{r}}} \le \exp\pr{C_{t,t_1} M_1^2} \norm{\gr u}_{L^{t_1}\pr{B_r}}$ for any $t_1 > t$, then we reach the conclusion of the theorem after further simplifications.
\end{proof}

\section{Unique continuation at infinity estimates}
\label{UCEstS}

Here we use Theorem \ref{OofV1} combined with an iterative argument to prove Theorem \ref{LandisThm}.
Our arguments are similar to those that appear in \cite{DKW19} and \cite{Dav19a}, which were inspired by the work of \cite{Dav14} and \cite{LW14}.
We prove the theorem for $q_1 > 2$ and $q_1 = 2$ in slightly different ways, and therefore divide this section accordingly.

\subsection{The case of $q_1 > 2$}

The proof of the theorem relies on an iteration scheme.
Therefore, we begin by presenting two propositions that are instrumental to this argument.
The first proposition gives the initial estimate, while the second gives the iterative step.
The initial estimate is as follows.

\begin{prop}[Initial estimate]
\label{bcProp}
Assume that for some $q_1, q_2 \in \pb{2, \iny}$, $c_0, \de_0 > 0$, $W = W_1 + i W_2 : \R^2 \to \C^2$ satisfies \eqref{W1Cond} and \eqref{W2Cond}.
Let $u: \R^2 \to \C$ be a solution to \eqref{ePDE} for which \eqref{uBd} and \eqref{normed} hold.
For any $\eps_0 > 0$ and any $S \ge S_b\pr{R_0, C_0, c_0, q_1, q_2, \de_0, t_0, \eps_0}$, it holds that
\begin{align}
\inf_{\abs{z_0} = S}\norm{\gr u}_{L^2\pr{B_{1/2}\pr{z_0}}} \ge \exp\pr{- S^\al},
\label{step1Est}
\end{align}
where $\al =\frac{2 \hat q\pr{\check q - 2}} {\check q\pr{\hat q- 2}} +\eps_0$ with $\hat q = \min\set{q_1, q_2}$ and $\check q = \max\set{q_1, q_2}$.
\end{prop}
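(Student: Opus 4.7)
The plan is to apply Theorem \ref{OofV0} to a combined rescaling-and-translation of $u$. For $S \ge 1$ and $\abs{z_0} = S$, I would set $\bar u\pr{w} := u\pr{2Sw + z_0}$; the new origin corresponds to $z_0$, and the target ball $B_{1/2}\pr{z_0}$ in the original variable becomes $B_{1/\pr{4S}}\pr{0}$ in $\bar{}$-variables. Crucially, $B_1\pr{0}$ in the original variable is contained in $B_{2S}\pr{z_0}$ for $S \ge 1$, so $\bar u$ inherits the original normalization on the unit ball centered at the \emph{new} origin. The function $\bar u$ satisfies $-\LP \bar u + \bar W \cdot \gr \bar u = 0$ with $\bar W\pr{w} = 2S \cdot W\pr{2Sw + z_0}$.

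Next, I would verify the hypotheses of Theorem \ref{OofV0} with exponent $\hat q := \min\set{q_1, q_2}$ on $B_2$. Bound \eqref{uBd} gives $\norm{\bar u}_{L^\iny\pr{B_2}} \le \exp\pr{C S^{1 - 2/q_1}} =: \hat C$. The inclusion $B_1\pr{0} \subset B_{2S}\pr{z_0}$, combined with \eqref{normed} and H\"older's inequality on $B_1$ (valid since $t_0 \le 2$), yields $\norm{\gr \bar u}_{L^2\pr{B_1}} = \norm{\gr u}_{L^2\pr{B_{2S}\pr{z_0}}} \ge \norm{\gr u}_{L^2\pr{B_1\pr{0}}} \gtrsim 1 =: \hat c$. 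For the drift, the rescaling identity gives $\norm{\bar W_j}_{L^{q_j}\pr{B_2}} = \pr{2S}^{1 - 2/q_j} \norm{W_j}_{L^{q_j}\pr{B_{4S}\pr{z_0}}}$; then \eqref{W1Cond} yields $\norm{\bar W_1}_{L^{q_1}\pr{B_2}} \le C S^{1 - 2/q_1}$, and covering $B_{4S}\pr{z_0}$ by unit balls and invoking \eqref{W2Cond} gives $\norm{W_2}^{q_2}_{L^{q_2}\pr{B_{4S}\pr{z_0}}} \le \sum_{k \in \Z^2} \exp\pr{-c_0 q_2 \abs{k}^{1 - 2/q_1 + \de_0}} \le C$ uniformly in $S$, whence $\norm{\bar W_2}_{L^{q_2}\pr{B_2}} \le C S^{1 - 2/q_2}$. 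Applying H\"older on $B_2$ (using $\hat q \le q_j$) bundles these into a single bound $M := \norm{\bar W}_{L^{\hat q}\pr{B_2}} \le C S^{1 - 2/\check q}$ with $\check q := \max\set{q_1, q_2}$.

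Invoking Theorem \ref{OofV0} at the origin with target radius $r = 1/\pr{4S}$ would then yield $\norm{\gr \bar u}_{L^2\pr{B_r}} \ge r^{C\pr{1 + M^\mu} + c \log\brac{C \hat C \pr{1 + M}/\hat c}}$ where $\mu = 2\hat q/\pr{\hat q - 2}$. The key arithmetic identity $\mu\pr{1 - 2/\check q} = \frac{2\hat q}{\hat q - 2} \cdot \frac{\check q - 2}{\check q} = \frac{2\hat q\pr{\check q - 2}}{\check q\pr{\hat q - 2}} =: \alpha_0$ shows $M^\mu \le C S^{\alpha_0}$. Since $\alpha_0 \ge 2$ while $\log \hat C \lesssim S^{1 - 2/q_1} \le S^{1 - 2/\check q} < S^{\alpha_0}$, the exponent of $r$ is dominated by $C S^{\alpha_0}$. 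Undoing the change of variables identifies $\norm{\gr \bar u}_{L^2\pr{B_r}}$ with $\norm{\gr u}_{L^2\pr{B_{1/2}\pr{z_0}}}$, producing $\norm{\gr u}_{L^2\pr{B_{1/2}\pr{z_0}}} \ge \pr{1/\pr{4S}}^{C S^{\alpha_0}} = \exp\pr{-C S^{\alpha_0} \log\pr{4S}} \ge \exp\pr{-S^{\alpha_0 + \eps_0}}$ for any $\eps_0 > 0$ and all sufficiently large $S$, which is \eqref{step1Est}.

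The main subtlety will be the exponent arithmetic: the choice $\hat q = \min\set{q_1, q_2}$ is the largest value for which H\"older simultaneously combines $\bar W_1$ and $\bar W_2$ into a single $L^{\hat q}\pr{B_2}$ bound, and this very choice is also what makes $\mu\pr{1 - 2/\check q}$ collapse to the stated closed form $\frac{2\hat q\pr{\check q - 2}}{\check q\pr{\hat q - 2}}$. A minor but necessary ingredient is the uniform-in-$S$ convergence of the $W_2$ covering sum, which relies on positivity of $1 - 2/q_1 + \de_0$ to render the exponential decay from \eqref{W2Cond} summable over $\Z^2$. A related routine point is that Theorem \ref{OofV0} is literally stated with target at the origin of $B_2$, which is exactly why the translation $w \mapsto 2Sw + z_0$ is preferable to a pure scaling.
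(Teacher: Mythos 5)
Your proposal is correct and follows essentially the same route as the paper: rescale so the annulus near $z_0$ becomes $B_2$, verify the hypotheses of Theorem \ref{OofV0} with exponent $\hat q=\min\{q_1,q_2\}$ (the paper bounds $\|W_2\|_{L^{q_2}(\R^2)}$ by the same covering sum you write out), and observe that $M^{\mu}\lesssim S^{(1-2/\check q)\cdot 2\hat q/(\hat q-2)}=S^{\alpha_0}$ dominates the exponent. The only cosmetic differences are your choice of scale $2S$ about $|z_0|=S$ versus the paper's scale $S$ about $|z_0|=S-1$, and your use of the scale-invariance of the planar $L^2$ gradient norm to transfer the normalization, which is in fact slightly cleaner than the paper's treatment of that step.
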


\begin{proof}
Let $\eps_0 > 0$ be given.
Assume that $S$ is sufficiently large with respect to $R_0$, $C_0$, $c_0$, $q_1$, $q_2$, $\de_0$, $t_0$, $\eps_0$ as we will specify below.
Choose $z_0 \in \R^2$ so that $\abs{z_0} = S-1$.
Define 
\begin{align*}
& \tilde u\pr{z} = u\pr{z_0 + S z} \\
& \widetilde W\pr{z} = S \, W\pr{z_0 + S z}.
\end{align*}
Then $\LP \tilde u - \widetilde W \cdot \gr \tilde u = 0$ in $B_2$.
Assumption \eqref{W1Cond} implies that 
\begin{align*}
\norm{\widetilde W_{1}}_{L^{q_1}\pr{B_2}}
&\le S \pr{\int_{\R^2} \abs{W_1\pr{z_0 + S z}}^{q_1} dz}^{1/{q_1}}
= S^{1 - \frac 2 {q_1}},
\end{align*}
while \eqref{W2Cond} implies that $\norm{W_2}_{L^{q_2}\pr{\R^2}} \le A\pr{c_0, \de_0}$, from which it follows that
\begin{align*}
\norm{\widetilde W_{2}}_{L^{q_2}\pr{B_2}}
&\le S \pr{\int_{\R^2} \abs{W_2\pr{z_0 + S z}}^{q_2} dz}^{1/{q_2}}
\le A S^{1 - \frac 2 {q_2}}.
\end{align*}
We see that 
\begin{align*}
\norm{\widetilde W}_{L^{\hat q}\pr{B_2}} 
&\le \norm{\widetilde W_{1}}_{L^{\hat q}\pr{B_2}} + \norm{\widetilde W_{2}}_{L^{\hat q}\pr{B_2}} 
\le C_{\hat q,q_1}\norm{\widetilde W_{1}}_{L^{q_1}\pr{B_2}} + C_{\hat q,q_2}\norm{\widetilde W_{2}}_{L^{q_2}\pr{B_2}} \\
&\le C_{\hat q,q_1}S^{1 - \frac 2 {q_1}} + C_{\hat q,q_2}A S^{1 - \frac 2 {q_2}}.
\end{align*}
Moreover, $\norm{\tilde u}_{L^\iny\pr{B_2}} \le \exp\brac{C_0\pr{3S}^{1 - \frac 2 {q_1}}}$ and from \eqref{normed} we have
\begin{align*}
c_{t_0} \norm{\gr \tilde u}_{L^{2}\pr{B_1}} 
\ge \norm{\gr \tilde u}_{L^{t_0}\pr{B_1}} 
\ge S\norm{\gr u}_{L^{t_0}\pr{B_1\pr{0}}} 
\ge S.
\end{align*}
Observe that
\begin{equation*}
\log\set{\exp\brac{C_0\pr{3S}^{1 - \frac 2 {q_1}}}\frac{{1 + } C_{\hat q,q_1}S^{1 - \frac 2 {q_1}} + C_{\hat q,q_2}A S^{1 - \frac 2 {q_2}}}{S}}\le CS^{1 - \frac 2 {q_1}}.
\end{equation*}
Since $\hat q > 2$, then an application of Theorem \ref{OofV0} shows that
\begin{align*}
\norm{\gr u}_{L^{2}\pr{B_{1/2}\pr{z_0}}}
&=\frac 1S \norm{\gr \tilde u}_{L^{2}\pr{B_{1/2S}}}
\ge \pr{\frac 1 {2S}}^{C \pr{C_{q,q_1}S^{ \frac {q_1 -2} {q_1}} + C_{q,q_2}A S^{\frac{q_2 - 2} {q_2}}}^{\frac{2\hat q}{\hat q-2}}}
\ge \exp\pr{- C S^{\frac{2 \hat q\pr{\check q - 2}} {\check q\pr{\hat q- 2}}} \log S},
\end{align*}
where we have assumed that $S$ is large with respect to $C_0$, $q_1$, $q_2$, and $A$.
Assuming further that $S$ is so large that $C \log S \le S^{\eps_0} \pr{1 - \frac 1 {S}}^\al$, we see that \eqref{step1Est} holds, as required.
\end{proof}

Now we present the proposition which will be repeatedly applied in the proof of Theorem \ref{LandisThm} when $q_1 > 2$.

\begin{prop}[Iterative estimate]
\label{itProp}
Assume that for some $q_1, q_2 \in \pb{2, \iny}$, $c_0, \de_0 > 0$, $W = W_1 + i W_2 : \R^2 \to \C^2$ satisfies \eqref{W1Cond} and \eqref{W2Cond}.
Let $u: \R^2 \to \C$ be a solution to \eqref{ePDE} for which \eqref{uBd} holds.
Let $\eps > 0$, $\eps_1 \in \pr{0, \frac{\de_0}{1 - \frac 2 {q_1} + \de_0}}$.
Suppose that for any $S \ge S_r\pr{R_0, C_0, c_0, q_1, q_2, \de_0, \eps_1, \eps}$, there exists an $\al > 1 + \eps$ so that
\begin{align}
\inf_{\abs{z_0} = S}\norm{\gr u}_{L^2\pr{B_{1/2}\pr{z_0}}} \ge \exp\pr{- S^\al}.
\label{stepnEst}
\end{align}
With $R = S + \pr{\frac S 2}^{\frac1{1- \eps_1}} -\frac 1 2$ and $\be = \left\{\begin{array}{ll} \al - \frac{\al -1}{2}\eps_1 & \text{ if } \al\pr{1 - \eps_1} > 1 - \frac 2 {q_1} \\ 1 - \frac 2 {q_1} + 2 \eps_1 & \text{ otherwise} \end{array}\right.$, it holds that
\begin{equation}
\inf_{\abs{z_1} = R} \norm{\gr u}_{L^2\pr{B_{1/2}\pr{z_1}}} \ge \exp\pr{- R^\be}.
\label{stepn1Est}
\end{equation}
\end{prop}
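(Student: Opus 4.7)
The plan is to prove Proposition \ref{itProp} by rescaling the drift equation around the target point $z_1$ (with $\abs{z_1} = R$) and then invoking the local order of vanishing estimate, Theorem \ref{OofV1}, in the $q_1 > 2$ regime. Set $\rho := (S/2)^{1/(1-\eps_1)} = R - S + \tfrac12$, and let $\tilde z = (S/R) z_1$, so that $\abs{\tilde z} = S$ and $\abs{z_1 - \tilde z} = R - S$. Define
\[
\tilde u(z) = u(z_1 + \rho z), \qquad \widetilde W(z) = \rho \, W(z_1 + \rho z),
\]
so that $\LP \tilde u = \widetilde W \cdot \gr \tilde u$. I will apply Theorem \ref{OofV1} on $B_d$ with $d := 1 + S/(2\rho) \in (1,2)$ for $S$ large, output center $z_0 = 0 \in B_1$ (corresponding to $z_1$ in the original coordinates), and output radius $r = 1/(2\rho)$. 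A key arithmetic check is that $\rho d = \rho + S/2$, so every $w \in B_{\rho d}(z_1)$ satisfies $\abs{w} \ge R - \rho d = S/2 - \tfrac12$.

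Next, I verify the four quantitative inputs to Theorem \ref{OofV1}. First, by \eqref{W1Cond}, $M_1 := \norm{\widetilde W_1}_{L^{q_1}(B_d)} = \rho^{\nu} \norm{W_1}_{L^{q_1}(B_{\rho d}(z_1))} \le \rho^\nu$, where $\nu := 1 - 2/q_1$. Second, covering $B_{\rho d}(z_1)$ by $O(\rho^2)$ unit balls whose centers all have modulus at least $S/2 - \tfrac12$ and applying \eqref{W2Cond} ball-by-ball gives
\[
M_2 := \norm{\widetilde W_2}_{L^{q_2}(B_d)} \le C \rho \exp\!\bigl(-c_0 (S/2 - \tfrac12)^{\nu + \de_0}\bigr).
\]
Third, \eqref{uBd} together with $R + \rho d \lesssim R$ yields $\hat C := \norm{\tilde u}_{L^\iny(B_d)} \le \exp(C R^\nu)$. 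Fourth, the change-of-variables identity $\norm{\gr \tilde u}_{L^2(B_1)} = \norm{\gr u}_{L^2(B_\rho(z_1))}$ combined with the inclusion $B_{1/2}(\tilde z) \su B_\rho(z_1)$ (which holds because $\abs{\tilde z - z_1} + \tfrac12 = \rho$) and the inductive hypothesis \eqref{stepnEst} applied at $\tilde z$ delivers $\hat c := \exp(-S^\al) \le \norm{\gr \tilde u}_{L^2(B_1)}$.

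The heart of the matter is tracking the exponent in \eqref{localEst}. The hypothesis $\eps_1 < \de_0/(\nu + \de_0)$ is precisely what is needed so that $\rho^\nu = (S/2)^{\nu/(1-\eps_1)}$ is dominated by $(S/2)^{\nu + \de_0}$; consequently $M_2^{\mu_2} \exp(C_3 M_1) \to 0$ as $S \to \iny$, and the first bracket in the exponent of \eqref{localEst} contributes only $O(1)$. For the second bracket, $\log d \simeq S/(2\rho)$, so $1/\log d \simeq 2\rho/S \simeq R^{\eps_1}$ (using $S \simeq 2 R^{1-\eps_1}$ from $R \simeq \rho$). Combining with $C_1 M_1 \lesssim R^\nu$ and $\log(\hat C/\hat c) \lesssim R^\nu + S^\al$, the second bracket is of order $R^{\eps_1}(R^\nu + R^{\al(1-\eps_1)}) = R^{\max\set{\nu,\,\al(1-\eps_1)} + \eps_1}$. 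Since $r = 1/(2\rho)$ is small for $S$ large, Theorem \ref{OofV1} yields
\[
\norm{\gr u}_{L^2(B_{1/2}(z_1))} = \norm{\gr \tilde u}_{L^2(B_{1/(2\rho)}(0))} \ge \exp\!\bigl(-C R^{\max\set{\nu,\,\al(1-\eps_1)} + \eps_1} \log R\bigr).
\]

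The final step, and the main technical obstacle, is the dichotomy producing the stated $\be$. When $\al(1-\eps_1) > \nu$, the dominant exponent is $\al(1-\eps_1) + \eps_1$, and one computes $\be - \brac{\al(1-\eps_1) + \eps_1} = (\al-1)\eps_1/2$, which is strictly positive because $\al > 1 + \eps$; this gap absorbs the stray $\log R$ factor once $R$ is large enough. Otherwise the dominant exponent is $\nu + \eps_1$, and $\be - (\nu + \eps_1) = \eps_1 > 0$ again absorbs the $\log R$. In both cases we conclude $\norm{\gr u}_{L^2(B_{1/2}(z_1))} \ge \exp(-R^\be)$. The non-routine portion is merely verifying that all residual error terms — the constants $C_1, C_2, C_3$ from Theorem \ref{OofV1}, the $\log \sqrt{d-1}$ contribution, and the precise relation between $R$ and $\rho$ — fit within the gap $(\al-1)\eps_1/2$ or $\eps_1$, which is polynomial bookkeeping once the exponential decay of $W_2$ has been exploited.
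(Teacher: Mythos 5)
Your proposal is correct and follows essentially the same route as the paper: rescale by $\rho=(S/2)^{1/(1-\eps_1)}$ (the paper's $T$) about $z_1$, bound $M_1$ by scaling, bound $M_2$ by covering $B_{\rho d}(z_1)$ with unit balls centered at distance $\ge (S-1)/2$ from the origin, feed the hypothesis \eqref{stepnEst} at $\tilde z = Sz_1/|z_1|$ into Theorem \ref{OofV1}, use $\eps_1 < \de_0/(1-\tfrac2{q_1}+\de_0)$ to kill the $M_2^{\mu_2}\exp(C_3M_1)$ term, and absorb the residual $\log R$ via the gaps $(\al-1)\eps_1/2$ and $\eps_1$ in the two cases of $\be$. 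The only (cosmetic) difference is that you correctly record the two-dimensional change-of-variables identity $\norm{\gr\tilde u}_{L^2(B_1)}=\norm{\gr u}_{L^2(B_\rho(z_1))}$ without the extraneous factor of $T$ that appears in the paper's inequalities, which in any case is harmless there.
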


\begin{proof}
Define $T = \pr{\frac S 2}^{\frac 1{1 - \eps_1}}$ and set $d = 1 + \frac{S}{2T}$. 
Let $z_1 \in \R^2$ be such that $\abs{z_1} = S + T -\frac 1 2= R$.
Define 
\begin{align*}
&\tilde u\pr{z} = u\pr{z_1 + T z} \\
&\widetilde W\pr{z} = T W\pr{z_1 + T z}.
\end{align*}
Then $\LP \tilde u - \widetilde W \cdot \gr \tilde u = 0$ in $B_d$.
Assumption \eqref{W1Cond} implies that 
\begin{align*}
\norm{\widetilde W_1}_{L^{q_1}\pr{B_d}}
&\le T \pr{\int_{\R^2} \abs{W_1\pr{z_1 + T z}}^{q_1} dz}^{1/{q_1}}
= T^{1 - \frac 2 {q_1}},
\end{align*}
while 
\begin{align*}
\norm{\widetilde W_2}_{L^{q_2}\pr{B_d}}
&= T \pr{\int_{B_d} \abs{W_2\pr{z_1 + T z}}^{q_2} dz}^{1/{q_2}}
= T^{1 - \frac 2 {q_2}} \pr{\int_{B_{Td}\pr{z_1}} \abs{W_2\pr{z}}^{q_2} dz}^{1/{q_2}}.
\end{align*}
We may cover $B_{Td}\pr{z_1}$ with $N \sim T^2$ balls of radius $1$, so it follows from condition \eqref{W2Cond} that
\begin{align*}
\norm{\widetilde W_2}_{L^{q_2}\pr{B_d}}
&\le T^{1 - \frac 2 {q_2}} \pr{\sum_{j=1}^N \int_{B_{1}\pr{z_j}} \abs{W_2\pr{z}}^{q_2} dz}^{1/{q_2}}
\le T^{1 - \frac 2 {q_2}} \brac{\sum_{j=1}^N \exp\pr{- q_2 c_0 \abs{z_j}^{1 -\frac 2 {q_1} + \de_0}}}^{1/{q_2}} \\
&\le T^{1 - \frac 2 {q_2}} \set{cT^2 \exp\brac{- q_2 c_0 \pr{\frac {S-1} 2}^{1 - \frac 2 {q_1} + \de_0}}}^{1/{q_2}}
\le \exp\pr{- \tilde c_0 S^{1 - \frac 2 {q_1} + \de_0}},
\end{align*}
where we have used that each ball is centered a distance of at least ${\frac {S-1} 2}$ from the origin.
Moreover, $\norm{\tilde u}_{L^\iny\pr{B_d}} \le \exp\brac{C_0 \pr{\frac 3 2 S + 2T}^{1 - \frac 2 {q_1}}} \le \exp\pr{5^{1 - \frac 2 {q_1}} C_0 T^{1 - \frac 2 {q_1}}} = \exp\pr{\tilde C_0 T^{1 - \frac 2 {q_1}}}$ and from \eqref{stepnEst} we see that with $z_0 := S \frac{z_1}{\abs{z_1}}$, 
\begin{align*}
\norm{\gr \tilde u}_{L^2\pr{B_1}} 
\ge T \norm{\gr u}_{L^2\pr{B_{1/2}\pr{z_0}}} \ge \exp\pr{- {c} S^\al }.
\end{align*}
We are now in a position to apply Theorem \ref{OofV1} to the function $\tilde u$.
Doing so yields 
\begin{align*}
\norm{\gr \tilde u}_{L^2\pr{B_{1/2T}\pr{0}}} 
&\ge \pr{\frac 1 {2T}}^{C_{2} \brac{1+ \exp\pr{C_3 T^{1 - \frac 2 {q_1}}- \tilde c_0 \mu_2 S^{1 - \frac 2 {q_1} + \de_0}}} + \frac{2c T}{S} \brac{ \tilde C_1 T^{1 - \frac 2 {q_1}} + {c} S^\al + \exp\pr{- \tilde c_0 S^{1 - \frac 2 {q_1} + \de_0}} + \log \pr{C_2\sqrt{\frac{2T}{S}}}}},
\end{align*}
where $\tilde C_1 = \tilde C_0 + C_1$, $\mu_2 = \frac{2q_2}{q_2-2}$ and all of the new constants depend on $R_0$, $q_1$, and $q_2$.
If $S$ is sufficiently large in the sense that $\tilde c_0 \mu_2 S^{1 - \frac 2 {q_1} + \de_0} \ge C_3 \pr{S/2}^{\frac {1 - \frac 2 {q_1}}{1 -\eps_1}}$ (which is always possible because of the relationship between $\eps_1$ and $\de_0$), then
\begin{align*}
\norm{\gr u}_{L^2\pr{B_{1/2}\pr{z_1}}} 
&= \frac 1T \norm{\gr \tilde u}_{L^2\pr{B_{1/2T}\pr{0}}} 
\ge \pr{\frac 1 {2T}}^{2C_{2} + \frac{2 \tilde c T}{S} \pr{ \tilde C_1 T^{1 - \frac 2 {q_1}} + {c}S^\al}}.
\end{align*}
If $\al\pr{1 - \eps_1} > 1 - \frac 2 {q_1}$, then $S^\al > T^{1 - \frac 2 {q_1}}$ and then
$$\norm{\gr u}_{L^2\pr{B_{1/2}\pr{z_1}}}  \ge \exp\pr{-C T^{\al - \pr{\al - 1}\eps_1} \log T}.$$
If $S$ is sufficiently large in the sense that 
$\pr{S/2}^{ \frac{\eps_1\eps}{2\pr{1-\eps_1}}} \ge \frac C {1 - \eps_1}\log\pr{S/ 2}$,
then $R^\be \ge C T^{\al - \pr{\al -1} \eps_1} \log T$ and it follows that
\begin{equation}
\norm{\gr u}_{L^2\pr{B_{1/2}\pr{z_1}}} \ge \exp\pr{- R^\be}.
\label{z1Bound}
\end{equation}
On the other hand, if $\al\pr{1 - \eps_1} \le 1 - \frac 2 {q_1}$, then the first term is dominant and
\begin{align*}
\norm{\gr u}_{L^2\pr{B_{1/2}\pr{z_1}}} 
&\ge \exp\pr{-CT^{1 - \frac 2 {q_1} + \eps_1} \log T}.
\end{align*}
If $S$ is large enough so that
$\pr{S/2}^{\frac{\eps_1}{1 - \eps_1}} \ge \frac{C}{1 - \eps_1}\log \pr{S/2}$,
then we again see that \eqref{z1Bound} holds.
Since $z_1 \in \R^2$ with $\abs{z_1} = R$ was arbitrary, \eqref{stepn1Est} has been shown.
\end{proof}

Now we use Proposition \ref{bcProp} followed by repeated applications of Proposition \ref{itProp} to prove Theorem \ref{LandisThm}.

\begin{proof}[The proof of Theorem \ref{LandisThm} for $q_1 > 2$]
Let $\eps > 0$ be given then choose $\eps_1 \in \pr{0, \min\set{\frac{\de_0}{1 - \frac 2 {q_1} + \de_0}, \frac{\frac 2 {q_1} + \frac \eps 2}{1 + \frac \eps 2}}}$ and $\eps_0 > 0$.
Choose $S_0 \ge \max\set{S_b\pr{R_0, C_0, c_0, q_1, q_2, \de_0, t_0, \eps_0}, S_r\pr{R_0, C_0, c_0, q_1, q_2, \de_0, \eps_1, \frac \eps 2}}$, where $S_b$ and $S_r$ are as given in Propositions \ref{bcProp} and \ref{itProp}, respectively.
Define $\al_0 = \frac{2 \hat q\pr{\check q - 2}} {\check q\pr{\hat q- 2}} +\eps_0$, where $\hat q = \min\set{q_1, q_2}$ and $\check q = \max\set{q_1, q_2}$.
An application of Proposition \ref{bcProp} shows that 
\begin{align*}
\inf_{\abs{z} = S_0}\norm{\gr u}_{L^2\pr{B_{1/2}\pr{z}}} \ge \exp\pr{- S_0^{\al_0}}.
\end{align*}
By assumption, we have that $1 + \frac \eps 2 > \frac{1 - \frac 2 {q_1}}{1 - \eps_1}$.
Assuming that $\al_k > 1 + \frac \eps 2$ for $k = 0, 1, \ldots$, we are in the first case of the choice for $\be$ from Proposition \ref{itProp}, so we recursively define 
\begin{align*}
&\al_{k+1} = \al_k - \frac{\al_k - 1}{2} \eps_1 \\
&S_{k+1} = S_k + \pr{\frac{S_k}2}^{\frac 1 {1 - \eps_1}} - \frac 1 2 .
\end{align*}
Then, for each such $k$, an application of Proposition \ref{itProp} shows that
\begin{equation*}
\inf_{\abs{z} = S_{k+1}} \norm{\gr u}_{L^2\pr{B_{1/2}\pr{z}}} \ge \exp\pr{- S_{k+1}^{\al_{k+1}}}.
\end{equation*}
Observe that $\abs{\al_k - \al_{k+1}} > \frac {\eps \eps_1}{4}$.
Therefore, there exists $M \in \N$ with $ M \le N := \ceil{4\pr{\al_0 - 1 - \frac \eps 2}/{\eps \eps_1}}$ so that $\al_M > 1 + \frac \eps 2$, while $\al_{M+1} \le 1 + \frac \eps 2$.
In particular, for any $R \ge S_{N+1} \ge S_{M+1}$, it holds that
\begin{equation*}
\inf_{\abs{z} = R} \norm{\gr u}_{L^2\pr{B_{1/2}\pr{z}}} 
\ge \exp\pr{- R^{\al_{M+1}}}
\ge \exp\pr{- R^{1 + \frac \eps 2}}.
\end{equation*}
An application of the Caccioppoli inequality described by \eqref{intEst} shows that 
\begin{align*}
\norm{\gr u}_{L^2\pr{B_{1/2}\pr{z}}} \le C \pr{1+ \norm{W_1}_{L^{q_1}} + \norm{W_2}_{L^{q_2}}} \norm{u}_{L^\iny\pr{B_{1}\pr{z}}}
\le C \norm{u}_{L^\iny\pr{B_{1}\pr{z}}}
\le \exp\pr{R^{\frac \eps 2}} \norm{u}_{L^\iny\pr{B_{1}\pr{z}}},
\end{align*}
assuming that $R$ is sufficiently large with respect to $C$.
Combining the previous two inequalities leads to the conclusion of the theorem.
\end{proof}

\begin{rem}
The careful reader may wonder why we have avoided using the second case of the choice for $\beta$, i.e., $\beta=1-\frac{2}{q_1}+2\eps_1$, from Proposition \ref{itProp} in our iteration scheme.
As the initial exponent is greater than $2$, then we must always start in the first case.
Each repeated application of Proposition \ref{itProp} will produce an exponent that is greater than $1$.
Therefore, the only way to move into the second case of $\beta$ is by choosing $\eps_1$ so that $\al \pr{1 - \eps_1} \le 1 - \frac 2 {q_1}$.
Doing so implies that $\eps_1 > \frac 2 {q_1}$, and then the resulting exponent is given by $\be = 1 - \frac 2 {q_1} + 2 \eps_1 > 1 + \eps_1$, which still exceeds $1$.
In other words, the second case of $\beta$ doesn't lead to any improvements, so we have chosen to avoid using this case.
\end{rem}

\subsection{The case of $q_1 = 2$}

Now we consider the case where $W_1$ belongs to the threshold space, $L^2$.
In contrast to the previous cases where $q_1 > 2$, here we only need to run the iteration process twice.

\begin{proof}[The proof of Theorem \ref{LandisThm} for $q_1 = 2$]
Choose $q \in \pr{2, q_2}$.
With $\nu = \frac 1 4 \pr{2 - \max\set{\frac{q}{q-1}, t_0}} > 0$ define $t_j = t_0 + j \nu$ for $j = 1, 2, 3$.
Define $\al > \pr{1 - \frac 2 {q_2}}\frac{t_1 q }{t_1 q- q - t_1} > 2$.
For $\eps \in \pr{0, 1}$ as given, define $\eps_0 = \frac \eps {2\pr{\al -1}}$.

Assume that $S$ is sufficiently large with respect to $R_0$, $C_0$, $q_2$, $c_0$, $\de_0$, $t_0$, $\eps$, as well as $q$, $t_1$, $t_2$, $t_3$, $\al$ (which depend on the other terms), as we will specify below.
Choose $z_0 \in \R^2$ so that $\abs{z_0} = S - 1$.
Define 
\begin{align*}
& u_0\pr{z} = u\pr{z_0 + S z} \\
& W_0\pr{z} = S W\pr{z_0 + S z}.
\end{align*}
Then $\LP u_0 - W_0 \cdot \gr u_0 = 0$ in $B_2$.
Assumption \eqref{W1Cond} implies that 
\begin{align*}
\norm{W_{0,1}}_{L^{2}\pr{B_2}}
&\le S \pr{\int_{\R^2} \abs{W_1\pr{z_0 + S z}}^{2} dz}^{1/{2}}
= 1,
\end{align*}
while \eqref{W2Cond} implies that $\norm{W_2}_{L^{q_2}\pr{\R^2}} \le A\pr{c_0, \de_0}$, from which it follows that
\begin{align*}
\norm{W_{0,2}}_{L^{q_2}\pr{B_2}}
&= S \pr{\int_{\R^2} \abs{W_2\pr{z_0 + S z}}^{q_2} dz}^{1/{q_2}}
\le A S^{1 - \frac 2 {q_2}}.
\end{align*}
Moreover, $\norm{u_0}_{L^\iny\pr{B_2}} \le e^{C_0}$ and from \eqref{normed} we see that
\begin{align*}
\norm{\gr u_0}_{L^{t_0}\pr{B_1}} 
\ge S \norm{\gr u}_{L^{t_0}\pr{B_1\pr{0}}} 
\ge{S}.
\end{align*}
An application of Theorem \ref{OofV1} with $d = 2$ shows that
\begin{align*}
\norm{\gr u}_{L^{t_2}\pr{B_{1/2}\pr{z_0}}}
&= \frac 1S \norm{\gr u_0}_{L^{t_2}\pr{B_{1/2S}}} 
\ge \pr{\frac 1 {2S}}^{C_2 \brac{1+ \pr{A S^{1 - \frac 2 {q_2}}}^{\frac{t_1 q }{t_1 q- q - t_1}} e^{C_3}} +  c C_1 + c \log \brac{\frac{C_2 e^{C_0}}{\RD{S}}\pr{1 + A S^{1 - \frac 2 {q_2}} }}}
\nonumber \\
&\ge \exp\pr{- C S^{\pr{1 - \frac 2 {q_2}}\frac{t_1 q }{t_1 q- q - t_1}} \log S},
\end{align*}
where we have assumed that $S$ is large enough to absorb all of the other terms into the dominant one by making the constant larger.
Assuming further that $S$ is so large that $C \log S \le S^{\al - \pr{1 - \frac 2 {q_2}}\frac{t_1 q }{t_1 q- q - t_1}}\pr{1 - \frac 1 S}^\al$, we see that 
\begin{align}
\norm{\gr u}_{L^{t_2}\pr{B_{1/2}\pr{z_0}}}
&\ge \exp\pr{- \abs{z_0}^{\al}} \quad \text{ whenever } \abs{z_0} >> 1.
\label{baseCase}
\end{align}

Recalling that $\eps_0 = \frac{\eps}{2\pr{\al-1}}$, define $T = \pr{\frac S 2}^{\frac 1{\eps_0}}$ and set $d = 1 + \frac{S}{2T}$. 
Let $z_1 \in \R^2$ be such that $\abs{z_1} = S + T -\frac 1 2= R$.
With 
\begin{align*}
&\tilde u\pr{z} = u\pr{z_1 + T z} \\
&\widetilde W\pr{z} = T W\pr{z_1 + T z},
\end{align*}
we see that $\LP \tilde u - \widetilde W \cdot \gr \tilde u = 0$ in $B_d$.
As in the previous proof, assumption \eqref{W1Cond} implies that $\norm{\widetilde W_1}_{L^{2}\pr{B_d}} \le 1$
while 
\begin{align*}
\norm{\widetilde W_2}_{L^{q_2}\pr{B_d}}
&= T \pr{\int_{B_d} \abs{W_2\pr{z_1 + T z}}^{q_2} dz}^{1/{q_2}}
= T^{1 - \frac 2 {q_2}} \pr{\int_{B_{Td}\pr{z_1}} \abs{W_2\pr{z}}^{q_2} dz}^{1/{q_2}}.
\end{align*}
We may cover $B_{Td}\pr{z_1}$ with $N \sim T^2$ balls of radius $1$, so it follows from condition \eqref{W2Cond} that
\begin{align*}
\norm{\widetilde W_2}_{L^{q_2}\pr{B_d}}
&\le T^{1 - \frac 2 {q_2}} \pr{\sum_{j=1}^N \int_{B_{1}\pr{z_j}} \abs{W_2\pr{z}}^{q_2} dz}^{1/{q_2}}
\le T^{1 - \frac 2 {q_2}} \brac{\sum_{j=1}^N \exp\pr{- q_2 c_0 \abs{z_j}^{\de_0}}}^{1/{q_2}} \\
&\le T^{1 - \frac 2 {q_2}} \set{cT^2 \exp\brac{- q_2 c_0 \pr{\frac {S-1} 2}^{\de_0}}}^{1/{q_2}}
\le \exp\pr{- \tilde c_0 S^{\de_0}},
\end{align*}
where we have used that each ball is centered a distance of at least ${\frac {S-1} 2}$ from the origin.
Moreover, $\norm{\tilde u}_{L^\iny\pr{B_d}} \le e^{C_0}$ and from \eqref{baseCase} we see that with $z_0 := S \frac{z_1}{\abs{z_1}}$, 
\begin{align*}
\norm{\gr \tilde u}_{L^{t_1}\pr{B_1}} 
\ge T \norm{\gr u}_{L^{t_1}\pr{B_{1/2}\pr{z_0}}} \ge \exp\pr{- {c} S^\al }.
\end{align*}
Now we apply the order of vanishing estimate described by Theorem \ref{OofV1} again.
With $t_3$ as defined above and $\mu = \frac{t_3 q }{t_3 q- q - t_3}$, we have
\begin{align*}
\norm{\gr u}_{L^{2}\pr{B_{1/2}\pr{z_1}}} 
&=\frac 1T \norm{\gr \tilde u}_{L^{2}\pr{B_{1/2T}}} \\
&\ge \pr{\frac 1 {2T}}^{C_2 \brac{1+ \exp\pr{C_3- \tilde c_0 \mu S^{\de_0}}} + \frac{2c T}{S} \brac{C_1 + C_0 + {c} S^\al + \exp\pr{-\tilde c_0 S^{\de_0}} + \log \pr{C_2 \sqrt{\frac{2T}{S}}}}} \\
&\ge \exp\pr{- C T^{1 + \pr{\al-1}\eps_0} \log T},
\end{align*}
where we have used that $S$ is large enough to absorb all other terms into the dominant one. 
Further assuming that $\log \pr{\frac S 2} \le \frac {\eps_0} C \pr{\frac S 2}^{\frac{\eps}{ 4\eps_0}} = \frac{\eps}{2C\pr{\al -1}} \pr{\frac S 2}^{\frac{\al -1}{2}}$ shows that $C \log T \le T^{\eps/4}$ from which it follows that $C T^{1 + \pr{\al-1}\eps_0} \log T \le R^{1+\frac{3\eps} 4}$.
As in the previous proof, if $R$ is sufficiently large, then an application of the Caccioppoli inequality shows that
$$
\norm{\gr u}_{L^2\pr{B_{1/2}\pr{z_1}}} 
\le C \pr{1+ \norm{W_1}_{L^{q_1}} + \norm{W_2}_{L^{q_2}}} \norm{u}_{L^\iny\pr{B_{1}\pr{z_1}}}
\le C \norm{u}_{L^\iny\pr{B_{1}\pr{z_1}}}
\le \exp\pr{R^{\frac \eps 4}} \norm{u}_{L^\iny\pr{B_{1}\pr{z_1}}}.
$$
It follows that
$$\norm{u}_{L^\iny\pr{B_{1}\pr{z_1}}} \ge \exp\pr{- R^{1 + \eps}}.$$
Since $z_1$ was an arbitrary point of sufficient distance to the origin, the conclusion of the theorem follows.
\end{proof}

\section{Carleman estimates}
\label{CarEstS}

In this section, we prove the Carleman estimate given by Theorem \ref{Carlp2}.
To do this, we rewrite the operator in polar coordinates then use an eigenvalue decomposition to establish our stated bounds.
The techniques used here are very similar to those that appeared in \cite{DZ19}, \cite{DZ18}, \cite{Dav19a}, \cite{DLW19}, and the references therein.

We use standard polar coordinates in $\mathbb R^2\backslash \{0\}$ by setting $x = r \cos \te$ 
and $y = r \sin \te$, where $r = \sqrt{x^2 + y^2}$ and $\te=\arctan\pr{y/x}$.
With the new coordinate $t=\log r$, we see that
\begin{align*}
\del_x= e^{-t}\pr{ \cos \te \der{}{t} - \sin \te \der{}{\te}}, \quad
\del_y= e^{-t}\pr{ \sin \te \der{}{t} + \cos \te \der{}{\te}}
\end{align*}
so that
\begin{align}
\mathcal{L} := 2 e^{t - i \te} \bar \del &= \del_{t} + i \del_{\te}.
\label{LDef}
\end{align}
The eigenvalues of $\del_{\te}$ are $i k$, $k \in \Z$, with corresponding eigenspace $E_k = \textrm{span}\set{e_k}$, 
where $e_k= \frac 1 {\sqrt{2\pi}}e^{ik \te}$ so that $\norm{e_k}_{L^2\pr{S^1}} = 1$.
For any $v \in L^2\pr{S^1}$, let $P_k v = v_k$ denote the projection of $v$ onto $E_k$.
We remark that the projection operator, $P_k$, acts only on the angular variables.
In particular, $P_k v\pr{t, \theta} = P_k v\pr{t, \cdot} \pr{\theta}$.
We may then rewrite the operator $\mathcal L$ as
\begin{align}
\mathcal{L} = \del_{t} - \sum_{k \in \Z} k P_k.
\label{LExpand}
\end{align}
Changing to the variable $t = \log \abs{z}$, the weight function is given by
$$\vp(t)= t+ \tfrac 1 2 \log t^2.$$

Since our result applies to functions that are supported in $B_{R_0} \setminus \set{0}$, then in terms of the new coordinate $t$, we study the case when $t$ is sufficiently close to $-\infty$. By a slight modification to the result described by \cite[Lemma 2]{DZ18} (see also \cite[Lemma 5.1]{DLW19}), we get the following lemma. For the proof of this result, we refer the reader to either \cite{DZ18} or \cite{DLW19}.

\begin{lem}
\label{upDown}
Let $M, N \in \N$ and let $\set{c_k}$ be a sequence of numbers such that $\abs{c_k} \le 1$ for all $k$.
For any $v \in L^2\pr{S^{1}}$ and every $p \in \brac{1, 2}$, we have that
\begin{align}
\norm{\sum^M_{k=N} c_k P_k v}_{L^2(S^{1})}
&\leq C \pr{ \sum^M_{k=N} |c_k|^2}^{\frac{1}{p}-\frac{1}{2}} \norm{v}_{L^p(S^{1})},
\label{seqBd}
\end{align}
where $C = C\pr{p}$.
\end{lem}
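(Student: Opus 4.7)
The plan is to reduce the inequality to its two endpoint cases $p=1$ and $p=2$ and then obtain the full range $p\in[1,2]$ by Riesz--Thorin interpolation applied to the linear operator $Tv := \sum_{k=N}^{M} c_k P_k v$, treating the sequence $\set{c_k}$ as fixed parameters.

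At the $L^2$ endpoint, the projections $P_k$ are pairwise orthogonal in $L^2(S^{1})$ since they project onto distinct eigenspaces of the self-adjoint operator $i\del_{\te}$. Writing $\hat v_k := \innp{v, e_k}$, Parseval's identity together with the hypothesis $|c_k|\le 1$ gives
\[
\norm{Tv}_{L^2(S^{1})}^2 = \sum_{k=N}^{M} |c_k|^2 |\hat v_k|^2 \le \sum_{k=N}^{M} |\hat v_k|^2 \le \norm{v}_{L^2(S^{1})}^2,
\]
so $\norm{T}_{L^2 \to L^2} \le 1$. At the $L^1$ endpoint, Parseval again yields $\norm{Tv}_{L^2}^2 = \sum_{k=N}^{M} |c_k|^2 |\hat v_k|^2$, while the pointwise bound $|\hat v_k| \le \norm{e_k}_{L^{\infty}(S^{1})}\norm{v}_{L^1} = (2\pi)^{-1/2}\norm{v}_{L^1}$ produces
\[
\norm{Tv}_{L^2(S^{1})} \le C \pr{\sum_{k=N}^{M} |c_k|^2}^{1/2}\norm{v}_{L^1(S^{1})}.
\]

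With the two endpoint bounds in hand, I would apply Riesz--Thorin interpolation between the operator pairs $(L^1,L^2)$ and $(L^2,L^2)$. Choosing $\theta \in [0,1]$ so that $1/p = (1-\theta)\cdot 1 + \theta/2$, i.e., $1-\theta = 2/p-1$, yields
\[
\norm{T}_{L^p \to L^2} \le \brac{C\pr{\textstyle\sum_{k=N}^{M} |c_k|^2}^{1/2}}^{1-\theta}\cdot 1^{\theta} = C'\pr{\sum_{k=N}^{M} |c_k|^2}^{(1-\theta)/2} = C'\pr{\sum_{k=N}^{M} |c_k|^2}^{1/p-1/2},
\]
which is exactly \eqref{seqBd}.

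I do not anticipate any substantive obstacle; the only delicate bookkeeping is lining up the interpolation parameter so that $(1-\theta)/2 = 1/p-1/2$, which is automatic from the choice above. A direct alternative would apply Hausdorff--Young, $\norm{\hat v}_{\ell^{p'}} \le C\norm{v}_{L^p}$, to the Parseval identity, followed by H\"older in $k$ with conjugate exponents $p/(2-p)$ and $p/(2(p-1))$; one would then use $|c_k|\le 1$ together with $p/(2-p) \ge 1$ on $[1,2]$ to pass from $|c_k|^{2p/(2-p)}$ back to $|c_k|^2$. This second route recovers the same constant and exponent, but the interpolation argument is slightly cleaner and avoids separate verification of the H\"older exponent conditions.
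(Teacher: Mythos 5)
Your proof is correct: the two endpoint bounds ($L^2\to L^2$ by orthogonality of the $P_k$, $L^1\to L^2$ by the uniform bound $\norm{e_k}_{L^\iny}=(2\pi)^{-1/2}$) and the Riesz--Thorin step with $1-\theta = 2/p-1$ line up exactly to give the exponent $\frac1p-\frac12$ with $C=C(p)$. The paper itself omits the proof and defers to \cite{DZ18} and \cite{DLW19}, where the argument is this same standard endpoint-plus-interpolation scheme, so your proposal matches the intended route.
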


The following proposition is crucial to the proof of Theorem \ref{Carlp2}.

\begin{prop}
\label{CarLp2}
Let $p \in \pb{1, 2}$. 
There exists a $t_0 < 0$ such that for any $\tau\gg 1$ and any $u \in C^\iny_c\pr{(-\infty, \ t_0)\times S^{1}}$, it holds that
\begin{equation}
\norm{t^{-1} e^{-\tau \vp(t) }u}_{L^2(dtd\te)}
\leq C \tau^{-1 + \frac 1 p} \norm{t e^{-\tau \vp(t)} \mathcal{L} u}_{L^p(dtd\te)},
\label{key}
\end{equation}
where $C = C\pr{p, t_0}$.
\end{prop}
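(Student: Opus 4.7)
\textbf{Proof sketch for Proposition \ref{CarLp2}.}

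The strategy is the standard conjugate-and-decompose approach, adapted to the $L^p$--$L^2$ setting by invoking Lemma \ref{upDown} on the angular frequencies. First I would set $w = e^{-\tau\vp(t)}u$, so that on the right-hand side $e^{-\tau\vp}\mathcal{L}u = \mathcal{L}_\tau w$, where the conjugated operator, using $\vp'(t) = 1 + 1/t$, is
\[
\Lt = \del_t + \tau + \tau/t - \sum_{k\in\Z} k P_k.
\]
The target inequality becomes $\|t^{-1}w\|_{L^2(dtd\te)} \le C\tau^{-1+1/p}\|t\,\Lt w\|_{L^p(dtd\te)}$. Next, since $P_k$ commutes with $\Lt$ (it only touches $\te$), projecting onto the $k$-th eigenspace gives the scalar ODE
\[
w_k'(t) + \bigl(\tau - k + \tau/t\bigr)w_k(t) = f_k(t), \qquad f = \Lt w,
\]
for the Fourier coefficients $w_k(t) = \int_{S^1} w(t,\te)\,\ol{e_k(\te)}\,d\te$.

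The key analytic step is inverting each ODE using the integrating factor $\mu_k(t) = (-t)^\tau e^{(\tau - k)t}$, with the boundary of integration chosen according to the sign of $\tau - k$: for $k \le \tau$ I integrate forward from $-\infty$ and for $k > \tau$ backward from $0^-$ (using that $w_k$ has compact support in $(-\infty, t_0)$). This yields Green-kernel representations
\[
w_k(t) = \int K_k(t,s)\,f_k(s)\,ds,
\qquad
K_k(t,s) = \epsilon_k\,\mathbf{1}_{I_k(t)}(s)\,\frac{\mu_k(s)}{\mu_k(t)},
\]
with $\epsilon_k=\pm 1$ and appropriate half-lines $I_k(t)$. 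The point of the special weight $\vp(t)=t+\log|t|$ is precisely that the $\tau$-dependent kernel $|t|^{-1}K_k(t,s)|s|$ enjoys a uniform-in-$k$ Schur-type bound $\sup_t \int |K_k|\,ds + \sup_s \int |K_k|\,dt \lesssim \tau^{-1}$, obtained by expanding $\mu_k(s)/\mu_k(t) = (s/t)^\tau e^{(\tau - k)(s - t)}$ and integrating. Applied with Young's inequality on the real line, this gives the mode-by-mode bound $\|t^{-1}w_k\|_{L^2(dt)} \le C\tau^{-1+1/p'}\|t\,f_k\|_{L^{p'}(dt)}$ for any $p'\in[1,2]$; taking $p'=2$ is the baseline, and the full $L^p$-on-the-right estimate is then obtained by trading the extra $\tau$-power against angular regularity.

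Assembling the modes is where Lemma \ref{upDown} enters. Writing $\|t^{-1}w\|_{L^2(dtd\te)}^2 = \sum_k \|t^{-1}w_k\|_{L^2(dt)}^2$ by Plancherel, I apply the ODE bound in the $L^2$-in-$t$ norm and then sum in $k$. To convert the resulting $\ell^2$ sum of $\|t\,f_k\|_{L^2(dt)}^2$ into $\|t\,f\|_{L^p(dt\,d\te)}^p$ with the right $\tau$-gain $\tau^{-1+1/p}$, I would partition the frequencies into dyadic blocks $\{k : 2^j \le |k-\tau| < 2^{j+1}\}$ (or more simply, blocks of size $\sim\tau$), apply the Schur bound $\tau^{-1}$ on each block, and use \eqref{seqBd} on each block to relate $\bigl(\sum |c_k|^2\bigr)^{1/2}$ with $\|f(t,\cdot)\|_{L^p(S^1)}$. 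After integrating in $t$ via Minkowski's integral inequality (the $L^2(dt)$ norm commutes past the $L^p(d\te)$ norm since $p \le 2$), the dyadic sum converges and produces the stated factor $\tau^{-1+1/p}$.

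The main obstacle, in my view, is the careful bookkeeping needed in the two interlocking estimates: getting both (i) the $\tau^{-1}$ gain from the Schur bound on each Green kernel, uniformly in $k$, and (ii) the sharp $L^p(S^1)\to \ell^2$ gain in Lemma \ref{upDown}, and then combining them via Minkowski so that the $p<2$ Lebesgue-in-$\te$ norm is on the outside rather than the inside. Restricting support to $t < t_0$ with $|t_0|$ small ensures that the factors $(s/t)^\tau$ and $|t|^{\pm 1}\log|t|$ behave favorably and keeps all constants uniform in $\tau$.
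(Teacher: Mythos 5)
Your overall architecture matches the paper's: conjugate by $e^{\tau\vp}$, decompose into angular modes, invert the resulting first-order ODEs with the integrating factor $(-t)^{\tau}e^{(\tau-k)t}$, and use Lemma \ref{upDown} to buy the $L^p(S^1)\to L^2(S^1)$ gain. But the key analytic claim — a uniform-in-$k$ Schur bound $\sup_t\int|t|^{-1}|K_k(t,s)||s|\,ds\lesssim\tau^{-1}$ — is not correct for the resonant modes. When $k$ is close to $\tau\vp'(t)=\tau(1+1/t)$, the factor $e^{(\tau-k)(s-t)}$ gives essentially no decay, and all that survives of $(s/t)^{\tau}$ is the Gaussian $e^{-\tau(s-t)^2/(2t^2)}$ coming from the second-order Taylor expansion of $\vp$; its $s$-integral is of size $|t|\tau^{-1/2}$, not $\tau^{-1}$. (This is consistent with the statement itself: at $p=2$ the right-hand side carries only $\tau^{-1/2}$.) So a mode-by-mode $\tau^{-1}$ gain followed by summation cannot be the mechanism; the extra factor $\tau^{-1/2+ (1/p-1/2)\cdot(\cdot)}$ has to come from the interplay between the Gaussian width $|t|\tau^{-1/2}$ in $s$, the effective number of non-negligible modes, and the exponent $\frac1p-\frac12$ in \eqref{seqBd}.

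Relatedly, your order of operations — integrate in $t$ mode-by-mode first, then assemble in $k$ — runs into the obstruction that for $p<2$ an $\ell^2$ sum of Fourier coefficients is \emph{not} controlled by the $L^p(S^1)$ norm; Lemma \ref{upDown} only gives this on a finite block at the price of a factor $\bigl(\sum|c_k|^2\bigr)^{1/p-1/2}$, and to make that factor harmless you must exploit the decay of the kernel in $k$, which is a statement at fixed $(t,s)$. The paper therefore splits $k$ into three ranges ($k<0$, $0\le k\le 2\tau$, $k>2\tau$), and on the middle range fixes $t$, splits again at the $t$-dependent resonant index $N=\lceil\tau\vp'(t)\rceil$, applies \eqref{seqBd} \emph{pointwise in $(t,s)$} with $c_k$ equal to the kernel values, sums the geometric series in $k$ to get $1+|t-s|^{-\alpha}$ with $\alpha=\frac{2-p}{2p}$, converts the remaining Gaussian into the weight $|t|(1+\tau^{1/2}|t-s|)^{-1}$, and only then applies Young's inequality in $t$; the exponent arithmetic $-\frac{1}{2\sigma}+\frac{\alpha}{2}=-1+\frac1p$ is exactly where $\tau^{-1+1/p}$ appears. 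Your outer ranges ($k<0$ and $k>2\tau$, where the exponential decay is genuine and a single-mode argument plus summation works) are fine in spirit, but the resonant block is the heart of the proof and your sketch does not yet contain a correct treatment of it.
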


\begin{proof}
To prove this lemma, we introduce the conjugated operator $\Lt$ of $\mathcal{L}$, defined by
$$\Lt v=e^{-\tau \vp\pr{t}}\mathcal{L}\pr{e^{\tau \vp\pr{t}} v}.$$
With $u = e^{\tau \vp\pr{t}} v$, inequality \eqref{key} is equivalent to
\begin{equation}
\norm{t^{-1} v}_{L^2(dtd\te)} \leq C \tau^{-1 + \frac 1 p} \norm{t \Lt v}_{L^p(dtd\te)}.
\label{keyb}
\end{equation}

From \eqref{LDef} and \eqref{LExpand}, the operator $\Lt$ takes the form
\begin{equation}
\Lt =\sum_{k \in \Z} (\partial_t + \tau \vp'\pr{t} -k)P_k
=\sum_{k \in \Z} (\partial_t + \tau + \tau t^{-1}  -k)P_k.
\label{ord1}
\end{equation}

We first consider $p = 2$.
Since $\disp \Lt v =\partial_t v + \tau\pr{1 + t^{-1}} v  -\sum_{k} k v_k$, then an integration by parts shows that
\begin{align*}
\norm{\Lt v}_{L^2\pr{dt d\te}}^2
&= \iint \abs{\partial_t v + \tau\pr{1 + t^{-1}} v  -\sum_{k \in \Z} k v_k}^2 dt \, d\te \\
&= \iint \abs{\partial_t v}^2  dt \, d\te
+  \iint \sum_{k} \brac{\tau\pr{1 + t^{-1}}  - k}^2 \abs{v_k}^2 dt \, d\te \\
&+ \iint \tau\pr{1 + t^{-1}} \partial_t \abs{v}^2 dt \, d\te
- \iint \sum_{k \in \Z} k \partial_t \abs{v_k}^2 dt \, d\te 
\ge \tau \norm{ t^{-1} v}_{L^2\pr{dt d\te}}^2,
\end{align*}
which implies \eqref{keyb} when $p = 2$.

Now we consider all $p \in \pr{1, 2}$.
Since $\disp \sum_{k \in \Z} P_k v= v,$ we split the sum into three parts.
Let $M=\lceil 2\tau\rceil$ and define
$$ P^h_\tau=\sum_{k> M}P_k, \quad \quad  P^l_\tau=\sum_{k=0}^{M}P_k , \quad \quad P_\tau^n=\sum_{k< 0} P_k.$$
In order to prove the \eqref{keyb}, it suffices to show that for any $p \in \pr{1, 2}$ {and any $v \in C^\iny_c\pr{(-\infty, \ t_0)\times S^{1}}$}
\begin{equation}
\norm{t^{-1} P^\square_\tau v }_{L^2(dtd\te)}
\le C \tau^{-1 + \frac 1 p} \norm{ t \Lt v }_{L^{{p}}(dtd\te)}
\label{key1}
\end{equation}
for $\square = h, l, n$.
The sum of all three inequalities will yield \eqref{keyb}, which implies \eqref{key}.

From \eqref{ord1}, we have the first order differential equation
\begin{equation*}
P_k \Lt v= \pr{\del_t + \tau \vp'\pr{t}-k}P_k v.
\end{equation*}
For $v \in C^\infty_{c}\pr{ (-\infty, \ t_0)\times S^{1}}$, solving the first order differential equation gives that
\begin{equation}
\label{star}
\begin{aligned}
P_k v(t, \te)
&=-\int_{t}^{\infty} e^{k(t-s)+\tau\pr{\vp(s)- \vp\pr{t}}} P_k \Lt v (s, \te)\, ds \\
&= \int_{-\iny}^{t} e^{k(t-s)+\tau\pr{\vp(s)- \vp\pr{t}}} P_k \Lt v (s, \te)\, ds.
\end{aligned}
\end{equation}

We first establish \eqref{key1} with $\square = h$ using the first line of \eqref{star}.
For $k > M \ge 2 \tau$, if $-\iny < t \le s \le t_0 <0$, then
\begin{equation*}
k(t-s)+\tau\pr{\vp(s)- \vp\pr{t}}
= -\pr{k - \tau}\abs{t-s}+ \frac \tau 2 \log\pr{s^2/t^2}
\le - \frac k2 \abs{t-s}.
\end{equation*}
Taking the $L^2\pr{S^{1}}$-norm in \eqref{star} and using this bound gives that
\begin{equation*}
\norm{P_k v(t, \cdot)}_{L^2(S^{1})}
\le \int_{-\infty}^{\infty} e^{-\frac{1}{2}k|t-s|} \norm{P_k \Lt v(s, \cdot)}_{L^2(S^{1})} \,ds.
\end{equation*}
With the aid of \eqref{seqBd}, we get
\begin{equation*}
\norm{P_k v(t, \cdot)}_{L^2(S^{1})}
\le C \int_{-\infty}^{\infty} e^{-\frac{1}{2}k|t-s|} \norm{\Lt v(s, \cdot)}_{L^p(S^{1})} \,ds
\end{equation*}
for any $1 \le p\le 2$.
Applying Young's inequality for convolution then yields
\begin{equation*}
\norm{P_k v}_{L^2(dtd\te)}
\le C \pr{\int_{-\infty}^{\infty} e^{-\frac{\sigma}{2}k|z|} dz}^{\frac{1}{\sigma}} \norm{\Lt v}_{L^p(dtd\te)}
\le C k^{\frac{1}{p} - \frac{3}{2}} \norm{\Lt v}_{L^p(dtd\te)},
\end{equation*}
where $\frac{1}{\sigma}=\frac{3}{2}-\frac{1}{p}$.
Squaring and summing up $k> M$ gives that
$$\sum_{k> M} \norm{P_k v}_{L^2(dtd\te)}^2
\le C \sum_{k> M} k^{-3+\frac{2}{p}} \norm{\Lt v}_{L^p(dtd\te)}^2
= C \tau^{-2+\frac{2}{p}} \norm{\Lt v}_{L^p(dtd\te)}^2,$$
where we have used that $p > 1$ to conclude that the series converges.
An application of orthogonality shows that
$$\norm{P^h_\tau v }_{L^2(dtd\te)} \le C \tau^{-1 + \frac 1 p} \norm{\Lt v }_{L^{{p}}(dtd\te)}$$
which implies \eqref{key1} with $\square = h$.

Now we prove \eqref{key1} for $\square = n$ using the second line of \eqref{star}.
For $k < 0$, if $-\iny < s \le t \le t_0$, then
\begin{align*}
k(t-s)+\tau\pr{\vp(s)- \vp\pr{t}}
&= -\pr{\tau - k} \abs{t-s} + \tau \log\pr{1 + \frac{\abs{s-t}}{\abs{t}}} \\
&\le -\pr{\frac \tau 2 - k} \abs{t-s},
\end{align*}
where we have performed a Taylor expansion.
Repeating the arguments from above shows that for $k < 0$,
\begin{equation*}
\norm{P_k v}_{L^2(dtd\te)}
\le C \pr{\frac \tau 2 - k}^{\frac{1}{p} - \frac{3}{2}} \norm{\Lt v}_{L^p(dtd\te)}.
\end{equation*}
Squaring and summing up $k < 0$ gives that
$$\sum_{k< 0} \norm{P_k v}_{L^2(dtd\te)}^2
\le C \tau^{-2+\frac{2}{p}} \norm{\Lt v}_{L^p(dtd\te)}^2,$$
where we have again used that $p > 1$ to conclude that the series converges.
As in the previous setting, \eqref{key1} holds with $\square = n$.

Fix $t\in (-\infty, \ t_0)$ and set $N=\lceil \tau \vp'(t)\rceil$.
Recalling that $\vp(t)= t + \tfrac 1 2 \log t^2$, an application Taylor's theorem shows that for all $s, t \in (-\iny, \ t_0)$
\begin{align*}
\vp(s) - \vp(t)
&= \vp'(t)(s-t)+\frac{1}{2}\vp''(s_0)(s-t)^2,
\end{align*}
where $s_0$ is some number between $s$ and $t$.
If $s>t$, then
\begin{align}
k(t-s)+\tau \pr{\vp(s) - \vp(t)}
&\le -\pr{k - N}\abs{t-s} - \frac \tau {2 t^2} \pr{s-t}^2.
\label{Taylorh}
\end{align}
Alternatively, if $s \le t$, then
\begin{align}
k(t-s)+\tau \pr{\vp(s) - \vp(t)}
&\le -\pr{N-1 - k}\abs{t-s}  - \frac \tau {2 s^2} \pr{s-t}^2.
\label{Taylorl}
\end{align}
For this reason, we split the sum corresponding to $\square = l$ and use both representations from \eqref{star}.

First we consider the values $N\leq k\leq M$.
From the first line \eqref{star}, we sum over $k$ and use the bound from \eqref{Taylorh} to get
\begin{equation*}
\norm{ \sum^M_{k=N} P_k v(t, \cdot)}_{L^2(S^{1})}
\le \int_{-\infty}^{\infty} 
\norm{ \sum^M_{k=N} e^{-\pr{k - N}\abs{t-s} - \frac \tau {2t^2} \pr{s-t}^2} P_k \Lt v (s, \cdot) }_{L^2(S^{1})}\, ds.
\end{equation*}
With $c_k= e^{-\pr{k - N}\abs{t-s} - \frac \tau {2t^2} \pr{s-t}^2}$, it is clear that $|c_k|\leq 1$.
Therefore, Lemma \ref{upDown} is applicable, so we may apply estimate \eqref{seqBd} to obtain
\begin{equation*}
\begin{aligned}
&\norm{ \sum^M_{k=N} e^{-\pr{k - N}\abs{t-s} - \frac \tau {2t^2} \pr{s-t}^2} P_k \Lt v (s, \cdot) }_{L^2(S^{1})}  
\le C \pr{\sum^M_{k=N} e^{-\pr{k - N}\abs{t-s} - \frac \tau {2t^2} \pr{s-t}^2}}^{\frac{1}{p}-\frac{1}{2}}
 \norm{ \Lt v(s, \cdot)}_{L^p(S^{1})}
\end{aligned}
\end{equation*}
for all $1 \le p \le 2$.
Since
\begin{align*}
\sum^M_{k=N} e^{-2\pr{k - N}\abs{t-s}}
&\le \sum^\iny_{k=0} e^{-2 k\abs{t-s}}
\le 1+ \abs{t-s}^{-1},
\end{align*}
then
\begin{equation*}
\norm{ \sum^M_{k=N} P_k v(t, \cdot)}_{L^2(S^{1})}
\le C \int_{-\infty}^{\infty} e^{- \frac {\al \tau}{2t^2} \pr{s-t}^2}(1+\abs{t-s}^{-\al}) \norm{ \Lt v(s, \cdot)}_{L^p(S^{1})}\, ds,
\end{equation*}
where $\al =\frac{2-p}{2p}$.
Given that
$$e^{\frac {\al \tau}{2t^2} \pr{s-t}^2} \ge \sqrt{1+\frac {\al \tau}{t^2} \pr{s-t}^2} 
\ge C(t_0) \abs{t}^{-1} (1+\tau^{1/2} |s-t|),$$
then, since $\al > 0$, it follows that
\begin{equation*}
e^{-\frac {\al \tau}{2t^2} \pr{s-t}^2} \lesssim \abs{t} (1+\tau^{1/2} |s-t|)^{-1}.
\end{equation*}
We see that
\begin{equation}
\norm{ \sum^M_{k=N} P_k v(t, \cdot)}_{L^2(S^{1})}
\le C \int_{-\infty}^{\infty}  \frac{(1+\abs{t-s}^{-\al})|t| \norm{ \Lt v(s, \cdot)}_{L^p(S^{1})}}{(1+\tau^{1/2} |s-t|)}\, ds.
\label{midSumHigh}
\end{equation}

For $0 \le k \le N-1$, we use the second line of \eqref{star}, then sum over $k$ and use the bound from \eqref{Taylorl} to get
\begin{equation*}
\norm{ \sum^{N-1}_{k=0} P_k v(t, \cdot)}_{L^2(S^{1})}
\le \int_{-\infty}^{\infty} 
\norm{ \sum^{N-1}_{k=0} e^{-\pr{N-1 - k}\abs{t-s}  - \frac \tau {2s^2} \pr{s-t}^2} P_k \Lt v (s, \cdot) }_{L^2(S^{1})}\, ds.
\end{equation*}
Arguing as before, we similarly conclude that
\begin{equation}
\label{midSumLow}
\norm{ \sum^{N-1}_{k=0} P_k v(t, \cdot)}_{L^2(S^{1})}
\le C \int_{-\infty}^{\infty}  \frac{(1+\abs{t-s}^{-\al})|s| \norm{ \Lt v(s, \cdot)}_{L^p(S^{1})}}{(1+\tau^{1/2} |s-t|)}\, ds.
\end{equation}
Combining \eqref{midSumHigh} and \eqref{midSumLow} shows that
\begin{equation*}
\norm{ t^{-1} P_\tau^l v(t, \cdot)}_{L^2(S^{1})}
\le C \int_{-\infty}^{\infty} \frac{(1+\abs{t-s}^{-\al})\norm{s \Lt v(s, \cdot)}_{L^p(S^{1})}}{ (1+\tau^{1/2} |s-t|)}\, ds.
\end{equation*}
Applying Young's inequality for convolution, we get
\begin{equation*}
\norm{ t^{-1} P_\be^l v}_{L^2(dtd\te)}
\leq C \brac{\int_{-\infty}^{\infty} \pr{\frac{1+\abs{z}^{-\al} }{1+\tau^{1/2} |z|}}^\si \, dz}^{\frac{1}{\si}} 
\norm{t \Lt v}_{L^p(S^{1})},
\end{equation*}
where $\frac{1}{\si}=\frac{3}{2}-\frac{1}{p}$.
A direct calculation then shows that
$$\brac{\int_{-\infty}^{\infty} 
\pr{\frac{1+\abs{z}^{-\al} }{1+\tau^{1/2} |z|}}^\si \, dz}^{\frac{1}{\si}} \le C\tau^{-\frac{1}{2\si}+\frac{\al}{2}}.$$
Since $-\frac{1}{2\si}+\frac{\al}{2} = \frac 1 {2p} - \frac 3 4 + \frac 1 {2p} - \frac 1 4 = - 1 + \frac 1 p$, 
then we have shown \eqref{key1} with $\square = l$, thereby completing the proof of the proposition.
\end{proof}

We now present the proof of Theorem \ref{Carlp2}.

\begin{proof}[Proof of Theorem \ref{Carlp2}]
Since $e^{2t} dt d\te = dz$, then
\begin{align*}
&\norm{t^{-1} e^{-\tau \vp(t) }u}_{L^2(dtd\te)}
= \norm{t^{-1} e^{-\tau \vp(t) -t} u e^{t}}_{L^2(dtd\te)}
= \norm{\pr{r \log r}^{-1} e^{-\tau \phi(r)} u }_{L^2(dz)} \\
&\norm{t e^{-\tau \vp(t)} \mathcal{L} u}_{L^p(dtd\te)}
= \norm{t e^{-\tau \vp(t) -2t/p} 2 e^{t - i \te} \bar \del u e^{2t/p}}_{L^p(dtd\te)}
= 2\norm{r^{1 - 2/p} \pr{\log r} e^{-\tau \phi\pr{r}} \bar \del u}_{L^p(dz)}
\end{align*}
and the result follows from applying Proposition \ref{CarLp2}.
\end{proof}


\end{document}